\newtheorem{theorem}{Theorem}
\newtheorem{proposition}[theorem]{Proposition}
\newtheorem{lemma}[theorem]{Lemma}
\newtheorem{corollary}[theorem]{Corollary}
\newtheorem{example}[theorem]{Example}
\theoremstyle{remark}
\newtheorem{remark}[theorem]{Remark}
\theoremstyle{definition}
\newtheorem{definition}[theorem]{Definition}
\DeclareMathOperator{\tr}{tr}
\DeclareMathOperator{\Wg}{Wg}
\DeclareMathOperator{\End}{End}
\DeclareMathOperator{\vecc}{vec}
\newcommand{\ot}[0]{\otimes}
\newcommand{\nn}[0]{\nonumber}
\newcommand{\one}[0]{\mathds{1}}
\newcommand{\N}{\mathds{N}}
\newcommand{\C}{\mathds{C}}
\newcommand{\Z}{\mathds{Z}}
\newcommand{\Q}{\mathds{Q}}
\begin{document}

\title  {Tensor Polynomial Identities}
\date   {\today}

\author {Felix Huber}
\address{ICFO - The Institute of Photonic Sciences,  Av. Carl Friedrich Gauss 3, 08860 Castelldefels (Barcelona), Spain}
\thanks{FH acknowledges support by
the Government of Spain (FIS2020-TRANQI and Severo Ochoa CEX2019-000910-S), 
Fundació Cellex, 
Fundació Mir-Puig, 
Generalitat de Catalunya (AGAUR SGR 1381 and CERCA),
and
the European Union under Horizon2020 (PROBIST 754510).}

\author{Claudio Procesi}
\address{Universit\`a degli Studi di Roma, La Sapienza}

\begin{abstract}
 Tensor polynomial identities generalize the concept of polynomial identities on $d \times d$ matrices to identities on tensor product spaces. 
 Here we completely characterize a certain class of tensor polynomial identities in terms of their associated Young tableaux. 
 Furthermore, we provide a method to evaluate arbitrary alternating tensor polynomials in $d^2$ variables.
\end{abstract}

\maketitle

The aim of this paper is to generalize an identity found by one of the authors~\cite{huber2020positive}: 
for all complex $2\times 2$ matrices $X_1, \dots, X_4$ the following expression vanishes,
$$
 \sum_{\sigma \in S_4} \epsilon_\sigma X_{\sigma(1)} X_{\sigma(2)} \otimes X_{\sigma(3)} X_{\sigma(4)} = 0\,.
$$
Of course this expression is similar to the standard identity where no tensor product $\otimes$ is present, namely
$\sum_{\sigma \in S_4} \epsilon_\sigma X_{\sigma(1)} X_{\sigma(2)} X_{\sigma(3)} X_{\sigma(4)} = 0$ holds
for all complex $2\times 2$ matrices $X_1, \dots, X_4$.

The theory of polynomial identities for algebras is a long established subject, see~\cite{agpr}.
Here we extend the theory to alternating tensor polynomials.
An article considering the more general tensor trace identities is in preparation~\cite{pro2020}.
\smallskip

For simplicity we start with an associative algebra $A$ over a field $F$ of characteristic~$0$. 
Denote by $F\langle X\rangle=F\langle x_1,\ldots,x_i,\ldots\rangle$ the free algebra in the variables $x_i$.  By definition the homomorphisms of $F\langle X\rangle$ to $A$  correspond to maps $X\to A$ and can be thought of as {\em evaluations} of the variables $X$ in $A$.  Then 
\begin{definition}\label{PI}
An element $f\in F\langle X\rangle$ is called a {\em polynomial identity} for $A$  (short a PI)  if it vanishes under all evaluations of $X$ in $A$.
\end{definition} 
The elements of $F\langle X\rangle$ are usually called {\em non commutative polynomials}.
Given any positive integer $k$ we may consider the algebra $A^{\otimes k}$ and then for every evaluation  $\pi:F\langle X\rangle\to A$ of $X$ in $A$ we have a corresponding evaluation  $\pi^{\otimes k}:F\langle X\rangle^{\otimes k}\to A^{\otimes k}$.
\begin{definition}\label{tPI}
The elements of $F\langle X\rangle^{\otimes k}$ will be called {\em tensor polynomials}.  
They can be thought of as polynomials in the {\em tensor  variables}  $x_j^{(i)}:=1^{\otimes i-1}\otimes x_j  \otimes  1^{\otimes  k-1 }$.
An element $f\in F\langle X\rangle^{\otimes k}$ is called a {\em tensor  polynomial identity} for $A$  (short a TPI)  if it vanishes under all evaluations of $X$ in $A$.
\end{definition} 
Given a positive integer $d$  and a field $F$ denote by $M_d(F)$ the algebra of $d\times d$ matrices with entries in $F$.  We will in fact often work with $F=\mathbb Q$, since all the formulas we are interested in have integer or rational coefficients,    and then denote  $M_d:=M_d(\mathbb Q).$  
We will start a study of  tensor polynomial  identities $G \in F \langle X\rangle^{\otimes n}$ 
in $k$~variables for  $d\times d$ matrices $M_d$.
Contrary to the theory of polynomial identities little is known about TPI's.  
We want to start this investigation by exhibiting an analogue of the Amitsur--Levitzki Theorem for matrices.  \smallskip

\section{Alternating functions}
We denote by $S_k$  the symmetric group on $k$ elements (usually $1,2,\ldots,k)$.   
Given $\sigma \in S_k$, a permutation, denote by $ \epsilon_\sigma =\pm 1$ its sign.

\begin{definition}\label{anti}
A polynomial function  $G(x_1,\ldots,x_k) $ with values in a vector space $V$ and with $x_i$ some vector variables in some space $U$ is 
\\

1. {\em antisymmetric}, or {\em alternating}, in the variables $x_i$, if 
$$G(x_{\pi(1)},\ldots,x_{\pi(k)})=\epsilon_\pi G(x_1,\ldots,x_k),\ \forall\pi\in S_k.$$

2. {\em linear } in a variable $x_i$ if
$$G(x_1,\ldots,x_i+y_i,\ldots ,x_k)=G(x_1,\ldots,x_i ,\ldots ,x_k)+G(x_1,\ldots, y_i,\ldots ,x_k).$$
\quad $G$ is {\em multilinear} if it is linear in every one of its variables.\\
\\ If $U$ and $V$ are representations of a group $H$ then \\

3. $G$ is  {\em $H$--equivariant} if
$$G(hx_1h^{-1},\ldots,hx_{k}h^{-1})= hG(x_1,\ldots,x_{k})h^{-1},\ \forall h\in H\,.$$
\end{definition}
We will apply this definition to $U=M_d,\ V=M_d^{\otimes n}$ and $H=GL(d)$.

\begin{remark}\label{alv} \phantom{.}
\begin{enumerate}\item If $G(x_1,\ldots,x_k) $ is antisymmetric in two variables, 
then it vanishes when any two variables are evaluated in the same element.
\item If $G(x_1,\ldots,x_k) $ is multilinear and $u_1,\ldots,u_m$ is a basis of $U$ then $G(x_1,\ldots,x_k)$ vanishes on  $U$ if and only if it vanishes when the $x_i$ are evaluated in the elements $u$ of the basis.
\item If $G(x_1,\ldots,x_k)$  is multilinear and    antisymmetric in some subset of the variables with $>m$ elements then it vanishes  on~$U$ if $U$ is of dimension $\leq m$.
\end{enumerate}
\end{remark}

A standard procedure to produce  an alternating polynomial from a given multilinear one $G(x_1,\ldots,x_k)$ in some variables $ X= (x_1,\ldots,x_k) $ is the process $Alt$ of {\em alternation} (usually one adds a denominator $\frac1{k!}$ to make the operator idempotent):
\begin{equation}\label{alter}
Alt_X G(X):= \sum_{\sigma \in S_k} \epsilon_\sigma G(x_{\sigma(1)}, \cdots ,x_{\sigma(k)}).
\end{equation}
The {\em standard polynomial} in $k$ variables is defined as
\begin{equation}\label{st}
 St_k(X) := \sum_{\sigma \in S_k} \epsilon_\sigma x_{\sigma(1)} \cdots x_{\sigma(k)}=Alt_Xx_1x_2\ldots x_k.
\end{equation}

It is clearly, up to a scalar multiple, the unique multilinear and antisymmetric non commutative polynomial in $k$ variables.  This depends upon the fact that the symmetric group $S_k$ acts in a simply transitive way  on multilinear monomials of degree $k$.

\section{Alternating tensor polynomials}
As mentioned, for non commutative polynomials there is, in each degree,  a unique multilinear alternating polynomial, namely the standard polynomial.
However, for multilinear tensor polynomials an analogous statement is no longer true,
since a tensor monomial is of the form (here $m_i$ are ordinary monomials):
\begin{equation}\label{tem}
M=m_1\otimes m_2\otimes \ldots\otimes m_n,\ \deg(m_i)=a_i,\ \deg(M)=\sum_{i=1}^k  a_i.
\end{equation}
The symmetric group $S_k$ still acts in a simply transitive way  on multilinear monomials of degree $k=\sum_ia_i  $  from Formula~\eqref{tem}.
 
So in particular a multilinear tensor polynomial, in $k$ variables,  is a linear combination of terms
corresponding to decompositions of $k$ into $n$~parts. 

For all sequences $\underline a:=(a_1, a_2, \ldots , a_n ), a_i\in\N,\ \sum_ia_i=k$ and $n$, 
we define the element $ST(\underline a)(x_1,\ldots,x_k)\in\Q\langle X\rangle^{\otimes n}$ by alternating a tensor monomial in which the variables appear in increasing order
\begin{align}\label{stlam}
  ST(\underline a)(x_1,\ldots,x_k) &:= Alt_X M_{\underline a}(x_1,\ldots,x_k) \\
  M_{\underline a}(x_1,\ldots,x_k) &:=X_1\otimes X_2\otimes\ldots\otimes X_n\,, \nn \\
X_1&:=x_{1}x_{2}\cdots x_{a_1}\,,\nn\\
X_2&:= x_{a_1+1}x_{a_1+2}\cdots x_{a_1+a_2}\,, \nn\\
\vdots \,\,\,\,&  \nn\\
X_{n-1}&:=  x_{k- a_{n-1} -a_n + 1} x_{k- a_{n-1} -a_n + 2} \cdots x_{k-a_n}\,. \nn\\
X_n&:=  x_{k-a_n + 1}x_{ k-a_n+2}\cdots x_{ k}\,. \nn
\end{align}
 
In order to study  which of the tensor polynomials $ST(\underline a)$ are  identities we may assume that, 
up to a permutation of the tensor factors,
the sequence $a_i$ is non--increasing and hence a partition $\lambda\vdash k$.
In fact define $\tilde{\underline a} := (i,j) \underline a$ obtained by exchanging the elements $i$ and $j$ and of $\underline a$.
We then have:
 
\begin{proposition}\label{cse}
 $ {ST}(\tilde{\underline a})  =\epsilon\cdot  (i,j) ST(\underline a) (i,j),\ \epsilon=(- 1)^{a_ia_j} $, obtained by exchanging the tensor factors $i$ and $j$. Thus $ {ST}(\tilde{\underline a})$ is a TPI if and only if  $ ST(\underline a) (i,j) $  is a TPI.\end{proposition}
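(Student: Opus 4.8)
The plan is to convert the claimed identity into a relabeling of the $k$ variables, read off the sign from an inversion count, and then obtain the TPI equivalence formally. Throughout I write $\tau$ for the linear map on $\Q\langle X\rangle^{\otimes n}$ that exchanges the tensor factors in positions $i$ and $j$. This is exactly the operation $(i,j)\,ST(\underline a)\,(i,j)$ of the statement: upon evaluation in $M_d^{\otimes n}=\End((\C^d)^{\otimes n})$ it is realized as the conjugation $v\mapsto P_{ij}\,v\,P_{ij}$, where $P_{ij}$ is the operator permuting the two corresponding factors of $(\C^d)^{\otimes n}$.

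First I would record that $\tau$ commutes with the alternation $Alt_X$: the operator $Alt_X$ permutes only the variable labels $1,\dots,k$, whereas $\tau$ permutes only the tensor positions, so $\tau(\sigma\cdot M)=\sigma\cdot(\tau M)$ for every monomial $M$ and $\sigma\in S_k$, and summing with signs gives $\tau(ST(\underline a))=Alt_X(\tau M_{\underline a})$. Now $\tau M_{\underline a}$ and $M_{\tilde{\underline a}}$ are both multilinear tensor monomials whose tensor blocks have the matching sizes $\tilde{\underline a}$, so there is a unique $\rho\in S_k$ relabeling variables with $\rho\cdot M_{\tilde{\underline a}}=\tau M_{\underline a}$. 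The elementary identity $Alt_X(\rho\cdot M)=\epsilon_\rho\,Alt_X(M)$ then yields $\tau(ST(\underline a))=\epsilon_\rho\,ST(\tilde{\underline a})$, hence $ST(\tilde{\underline a})=\epsilon_\rho\,\tau(ST(\underline a))$.

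The step I expect to demand the most care is the evaluation of $\epsilon_\rho$, which is where the asserted sign $(-1)^{a_ia_j}$ must be pinned down. When the exchanged parts are adjacent ($j=i+1$) the picture is transparent: $\rho$ reverses two consecutive runs of lengths $a_i$ and $a_j$, so each of the $a_i$ indices of one run must cross each of the $a_j$ indices of the other, producing precisely $a_ia_j$ inversions and $\epsilon_\rho=(-1)^{a_ia_j}$. Since any $\underline a$ can be brought to non-increasing (partition) form by a succession of such adjacent exchanges, this already covers the reduction the section needs. For a general transposition I would factor the rearrangement into adjacent exchanges and multiply the signs, isolating the inversions contributed by the two moving blocks; the careful bookkeeping of the intervening blocks is the main obstacle and the point at which the precise form of the sign has to be checked.

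Finally, the equivalence of TPI's is immediate once the displayed identity is available. Evaluation commutes with $\tau$, so for every evaluation $\pi$ one has $\pi^{\otimes n}\big(\tau(ST(\underline a))\big)=P_{ij}\,\pi^{\otimes n}\big(ST(\underline a)\big)\,P_{ij}$. As $P_{ij}$ is invertible and $\epsilon_\rho=\pm1\neq0$, the element $\pi^{\otimes n}(ST(\tilde{\underline a}))$ vanishes for all $\pi$ exactly when $\pi^{\otimes n}(ST(\underline a)(i,j))$ does. Combined with the identity of the previous paragraph, this shows that $ST(\tilde{\underline a})$ is a TPI if and only if $ST(\underline a)(i,j)$ is, as asserted.
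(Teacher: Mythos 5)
Your overall route is the same as the paper's: realize $(i,j)\,ST(\underline a)\,(i,j)$ as $Alt_X$ applied to a relabeled monomial, and read the sign $\epsilon$ off the relabeling permutation $\rho$. Your inversion count in the adjacent case $j=i+1$ is correct, and is in fact cleaner than the paper's card-cutting parity argument; your final paragraph on the TPI equivalence (which needs only $\epsilon=\pm1$) is also fine. The gap is exactly the step you deferred: the non-adjacent case. It is not mere bookkeeping, because carrying out your own inversion count there gives a different answer from the one you are trying to prove. For $j>i+1$ every variable of block $i$ and every variable of block $j$ must also cross every variable of the intervening blocks, so with $B:=\sum_{i<l<j}a_l$ the relabeling $\rho$ has $a_ia_j+(a_i+a_j)B$ inversions, giving $\epsilon_\rho=(-1)^{a_ia_j+(a_i+a_j)B}$, which agrees with the asserted $(-1)^{a_ia_j}$ only when $(a_i+a_j)B$ is even. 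A concrete check: for $\underline a=(1,1,2)$ and $(i,j)=(1,3)$, the coefficient of $x_1x_2\ot x_3\ot x_4$ in $(1,3)\,ST((1,1,2))\,(1,3)$ comes from the $4$-cycle $\sigma=(1,4,2,3)$ and equals $-1$, whereas it equals $+1$ in $ST((2,1,1))$; hence $ST((2,1,1))=-\,(1,3)\,ST((1,1,2))\,(1,3)$, while $(-1)^{a_1a_3}=+1$.

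So the precise sign claimed in the Proposition cannot be established in general, and you should know that the paper's own proof has the same blind spot: its deck-cutting argument tracks only the variables of the two exchanged blocks, i.e.\ it implicitly treats them as adjacent. None of this harms how the Proposition is used downstream: an arbitrary $\underline a$ is brought to a partition by adjacent exchanges (where both your argument and the stated sign are correct), Formula~\eqref{conJ} is only ever needed up to sign, and the ``TPI if and only if TPI'' conclusion is insensitive to the value of $\epsilon\in\{\pm1\}$, exactly as your last paragraph shows. To make your write-up complete, either restrict the sign statement to adjacent transpositions or state the general sign $(-1)^{a_ia_j+(a_i+a_j)\sum_{i<l<j}a_l}$.
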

\begin{proof}  
 The sign $\epsilon$ depends upon the fact that, for a suitable $\sigma\in S_k$  we have   
 $$ M_{\underline {\tilde a}}=  M_{\underline a}(x_{\sigma (1)},\ldots,x_{\sigma(k)}),\quad   \text{e. g.}\   (1,2) x_1\otimes x_2(1,2)=x_2\otimes x_1=M_{(1,1)}(x_2,x_1). $$
 We need to determine the sign of $\sigma$.
 
The sequence of variables affecting positions $i$ and $j$ 
changes in $ST(\underline a)$ to $ST(\tilde{\underline a})$ is like that of cards in a deck when it is cut:
Now if both parts $ a_i$ and $a_j$ are of even size, then the original variable order can be restored by applying 
an odd permutation (specifically, an odd cycle) an even number of times, yielding $(+1)$.
If both parts $a_i$ and $a_j$ are of odd size, the original order of variables can be restored by applying 
an odd permutation and odd number of times, inducing a $(-1)$.
If one part has even length and the other odd, the original order of variables is restored by applying even permutations, still giving a $+1$ sign.
Thus a sign of $(-1)$ is only obtained when both $a_i$ and $a_j$ are of odd size, and $(+1)$ otherwise. This ends the proof.
\end{proof}

  One may also allow some  $ a_i=0$, where the empty product of variables is 1. 
For instance
$$ST(2,0,1 ) = \sum_{\sigma \in S_{ 3}}\epsilon_\sigma  x_{\sigma(1)}x_{\sigma(2)}\otimes 1\otimes         x_{\sigma(3)}. $$  

We define the partion $m^n = \underbrace{(m, \dots, m)}_{n \text{ times}}$
so we have the balanced element
 
\begin{equation}\label{ba}
 ST(m^n)(x_1, \dots, x_{mn}) :=  Alt_X X_1\otimes X_2\otimes\ldots\otimes X_n\,,
\end{equation}
where each $X_i$ has the same size, 
$X_1:=x_{1}x_{2}\cdots x_{m}$, $X_2:= x_{m+1}x_{m+2}\cdots x_{2m}$,\dots, $X_n:=  x_{(n-1)m+1}x_{(n-1)m+2}\cdots x_{mn}$.

\smallskip
The Amitsur--Levitzki Theorem states that for $d\times d$ matrices, $ St_{2d}(X)$ is a PI and of minimal degree. 
For tensor polynomials we ask  for an $n$--fold tensor power  of $M_d(F)$ and a partition 
$\lambda \vdash k$ wether $ST(\lambda)(x_1,\ldots,x_k)$ is a tensor polynomial identity. 
In particular, which is the minimum $m$  for which $ST(m^n)$ is a TPI? (This will be answered in Proposition \ref{ad}.)

Remark~\ref{alv} makes clear that if $k>d^2$, then all $ST(\lambda),\ \lambda\vdash k$ and $ST(m^n), mn=k$ are TPI
for the algebra of $d\times d$ matrices $M_d(F)$. It thus remains to analyze the case of $k\leq d^2$.

\bigskip

It is well known that:
\begin{proposition}\label{ilmulti}
A  multilinear and antisymmetric $  \C$ valued polynomial  function $g (x_1,\ldots,x_m)$ in $m$ variables $ x_i\in \mathbb C^m$ is a multiple    
of the determinant of the matrix with the $x_i$ as rows,
\begin{equation}
 g(x_1,\ldots,x_{m})  = u \det(x_1, \dots, x_m)\,, \quad u \in  \C .\nonumber
\end{equation} 
\end{proposition}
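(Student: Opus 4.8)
The plan is to exploit multilinearity to reduce $g$ to its values on the standard basis, and then to use antisymmetry to identify the result with the Leibniz expansion of the determinant. Let $e_1,\ldots,e_m$ denote the standard basis of $\C^m$ and write each argument in coordinates as $x_i=\sum_{j=1}^m a_{ij}e_j$, so that $A=(a_{ij})$ is precisely the matrix whose rows are the $x_i$.

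First I would expand $g$ using linearity in each of its $m$ slots,
$$
g(x_1,\ldots,x_m)=\sum_{j_1,\ldots,j_m=1}^{m} a_{1 j_1}a_{2 j_2}\cdots a_{m j_m}\, g(e_{j_1},\ldots,e_{j_m}).
$$
Next, by Remark~\ref{alv}(1) the antisymmetry of $g$ forces $g(e_{j_1},\ldots,e_{j_m})=0$ as soon as two of the indices $j_1,\ldots,j_m$ coincide. Hence only the terms indexed by a bijection $\{1,\ldots,m\}\to\{1,\ldots,m\}$, that is by a permutation $\sigma\in S_m$ with $j_i=\sigma(i)$, survive.

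For such a term, antisymmetry gives $g(e_{\sigma(1)},\ldots,e_{\sigma(m)})=\epsilon_\sigma\, g(e_1,\ldots,e_m)$. Collecting the surviving contributions yields
$$
g(x_1,\ldots,x_m)=g(e_1,\ldots,e_m)\sum_{\sigma\in S_m}\epsilon_\sigma\, a_{1\sigma(1)}\cdots a_{m\sigma(m)}=g(e_1,\ldots,e_m)\det(A),
$$
the last equality being the Leibniz formula for the determinant. Setting $u:=g(e_1,\ldots,e_m)\in\C$ completes the argument.

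There is essentially no obstacle here beyond careful bookkeeping; the one point that deserves attention is the vanishing of $g$ whenever two basis arguments agree, which is exactly the content of Remark~\ref{alv}(1) and relies on $\mathrm{char}\,F=0$, so that antisymmetry and alternation coincide. One could alternatively phrase the computation abstractly, noting that the space of multilinear alternating forms on $(\C^m)^m$ is the top exterior power $\textstyle\bigwedge^m(\C^m)^{\ast}$, which is one-dimensional and spanned by $\det$; but the direct expansion above is the most self-contained route.
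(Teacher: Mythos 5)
Your proof is correct. Note, however, that the paper does not actually prove Proposition~\ref{ilmulti}: it records it as well known and immediately remarks that it is equivalent to the statement that $\dim \bigwedge^m V = 1$ for an $m$-dimensional vector space $V$ --- i.e.\ the paper's ``proof'' is precisely the abstract exterior-power argument you mention in your closing paragraph as an alternative. What you have supplied is the standard self-contained verification of that well-known fact: expand by multilinearity in the basis $e_1,\ldots,e_m$, kill the terms with repeated indices by antisymmetry (this is exactly Remark~\ref{alv}(1), and you are right that over $\C$ there is no characteristic-$2$ subtlety in passing from antisymmetric to alternating), and recognize the Leibniz sum, giving $u = g(e_1,\ldots,e_m)$. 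Your route buys a proof readable without any exterior-algebra background and makes the constant $u$ explicit (which is mildly useful later, e.g.\ when the paper pins down the constant $\mathcal{C}_d$ in Remark~\ref{Iin}); the paper's route is shorter and places the statement in the conceptual framework ($\bigwedge M_d(F)^*$, primitive elements, Hopf's theorem) that the rest of the paper actually uses. Both are complete; yours simply unwinds what the paper takes as known.
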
  
Note that this is equivalent to the fact that for an $m$-dimensional vector space~$V$  we have $\dim \bigwedge^m V=1 $.

For a multilinear and antisymmetric vector valued polynomial $G(x_1,\ldots,x_{m}) \in W$,\ $x_i\in\C^m$,  which maps to a vector space $W$, 
each coordinate has the same property. Thus $$G(x_1,\ldots,x_{m})=w\det(x_1,\ldots,x_{m}),\quad w\in W$$
where $w$ is some constant vector in $W$.

We apply this to $M_d(F)$, a $d^2$ dimensional vector space,  and identify $M_d(F)\cong F^{d^2}$ using the canonical basis  of  elementary matrices $e_{i,j}$  ordered lexicographically.
For example, 
$$d=2,\quad e_{1,1},\  e_{1,2},\  e_{2,1},\  e_{2,2}. $$
Thus given $d^2$ matrices  $ x_1,\ldots, x_{d^2}\in M_d(F)$ 
we consider each matrix $x_i$ as a column vector with $d^2$ coordinates $\vecc(x_i) \in F^{d^2}$.
The {\em determinant} is defined by
\begin{equation}\nonumber
 \det(x_1, \dots, x_{d^2})= 
 \det\begin{pmatrix}
 \vecc(x_1) & \cdots & \vecc(x_{d^2})
 \end{pmatrix}.
\end{equation} 

\begin{remark}\label{coni}
The linear operators on matrices $L_g:x\mapsto gx$ and $R_g:x\mapsto xg $ have determinant   $\det(L_g)=\det(R_g)=\det(g)^m$  so $\det(L_gR_g^{-1})=1$. Therefore $\det(x_1, \dots, x_{d^2})$ is $GL(d)$-invariant,
\begin{equation}\label{eq:det_GLinv}
 \det(g x_1 g^{-1}, \dots, g x_{m} g^{-1})
= \det(x_1, \dots, x_{m} ),\quad \forall g \in GL(d)\,.
\end{equation}
\end{remark}

Denote by $\Sigma_n(F^d)$ the algebra spanned by the permutations in $M_d(F)^{\otimes n}=\End((F^d)^{\otimes n})$.

From Proposition~\ref{ilmulti}, for a multilinear alternating tensor polynomial $G( x_1,\ldots,x_{d^2})$ $\in F\langle  X\rangle^{\otimes n}$ 
in $d^2$ matrix variables evaluated in the algebra $M_d(F)$ of $d\times d$ matrices we have:
\begin{proposition}\label{lelej}
There is an element $J_G\in M_d(F)^{\otimes n}$ invariant under the diagonal action of the linear group $G=GL(d)$ such that 
\begin{equation}\label{JG}
G(x_1,\ldots,x_{d^2})=\det(x_1,\ldots,x_{d^2})J_G,\quad J_G\in \Sigma_n(F^d)\subset M_d(F)^{\otimes n} . 
\end{equation}
\end{proposition}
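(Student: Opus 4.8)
The plan is to combine the classification of alternating multilinear functions with Schur--Weyl duality. First I would invoke the vector-valued form of Proposition~\ref{ilmulti} recorded just above it: taking $m=d^2$, viewing each matrix as the column vector $\vecc(x_i)\in F^{d^2}$, and regarding $G$ as a multilinear alternating function valued in the vector space $W=M_d(F)^{\ot n}$, we obtain at once a constant vector $J_G\in M_d(F)^{\ot n}$ with
$$
 G(x_1,\ldots,x_{d^2})=\det(x_1,\ldots,x_{d^2})\,J_G .
$$
This already establishes the factorization~\eqref{JG}; what remains is to pin down the nature of the constant $J_G$, and this is where the real work lies.

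Next I would exploit equivariance. Any tensor monomial $m_1\ot\cdots\ot m_n$ satisfies $m_i(gx_1g^{-1},\ldots,gx_{d^2}g^{-1})=g\,m_i(x_1,\ldots,x_{d^2})\,g^{-1}$ for each word $m_i$, since the inner factors $g^{-1}g$ telescope; hence every tensor polynomial, evaluated on matrices, is $GL(d)$-equivariant for the diagonal conjugation action in the sense of Definition~\ref{anti}(3), namely
$$
 G(gx_1g^{-1},\ldots,gx_{d^2}g^{-1})=g^{\ot n}\,G(x_1,\ldots,x_{d^2})\,(g^{\ot n})^{-1}.
$$
Feeding the factorization into both sides, and using that the determinant is $GL(d)$-invariant by Remark~\ref{coni} (Equation~\eqref{eq:det_GLinv}), one finds
$$
 \det(x_1,\ldots,x_{d^2})\,J_G=\det(x_1,\ldots,x_{d^2})\,g^{\ot n}J_G(g^{\ot n})^{-1}.
$$
Since the determinant is not identically zero --- it equals $\pm 1$ on the basis of elementary matrices --- I may evaluate at a point where it is nonzero and cancel it, obtaining $J_G=g^{\ot n}J_G(g^{\ot n})^{-1}$ for every $g\in GL(d)$. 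Thus $J_G$ commutes with the whole diagonal image $\{g^{\ot n}\}$ and is invariant under the diagonal $GL(d)$ action.

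Finally I would identify the invariants. The elements of $\End((F^d)^{\ot n})=M_d(F)^{\ot n}$ commuting with every $g^{\ot n}$, $g\in GL(d)$, are exactly the commutant of the diagonal $GL(d)$ action; by Schur--Weyl duality in characteristic~$0$ this commutant is precisely the algebra $\Sigma_n(F^d)$ spanned by the permutations of the $n$ tensor factors. Since the permutation operators have integer entries, the statement descends from $\C$ to $F$ without difficulty, so $J_G\in\Sigma_n(F^d)$, completing the proof. The one genuinely nontrivial ingredient is this last appeal to Schur--Weyl duality for the commutant; the factorization and the equivariance computation are routine, and the only mild care needed is to cancel the determinant at a nonvanishing evaluation.
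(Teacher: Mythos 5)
Your proof is correct and takes essentially the same route as the paper's: the factorization comes from Proposition~\ref{ilmulti}, the invariance of $J_G$ from the equivariance of tensor polynomials together with the $GL(d)$-invariance of the determinant (Remark~\ref{coni}), and the membership $J_G\in\Sigma_n(F^d)$ from Schur--Weyl duality. Your extra details (cancelling the determinant at an evaluation where it is $\pm1$, and the descent from $\C$ to $F$) only make the argument more careful than the paper's own version.
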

\begin{proof} Formula \eqref{JG} is a special case of  Proposition \ref{ilmulti}, we only need to prove that  $J_G\in M_d(F)^{\otimes n}$ is invariant.       

 Naturally, non commutative polynomials are equivariant. 
Consequently, also any non commutative tensor polynomial  $G(x_1,\ldots,x_{k})\in F\langle  X\rangle^{\otimes n}$ 
is {equivariant} and thus it is a map  $M_d(F)^k\to M_d(F)^{\otimes n}$ satisfying
\begin{equation}
 G(gx_1g^{-1},\ldots,gx_{k}g^{-1})= gG(x_1,\ldots,x_{k})g^{-1},\quad \forall g\in GL(d) \,. \nonumber
\end{equation} 
 Here $g$ acts on $(F^d)^{\otimes n}$ diagonally, i.e. $g (y_1 \ot \dots \ot y_n) = g y_1 \ot \dots \ot g y_n$.
Since  from Formula \eqref{eq:det_GLinv} we have that $\det(x_1,\ldots,x_{d^2})$ is  $GL(d)$ invariant, 
it follows that $J_G$ is a constant equivariant map and hence an invariant.
  
  From the Schur-Weyl duality it is known that the elements $a\in M_d(F)^{\otimes n}=\End((F^d)^{\otimes n})$ invariant under the action of the linear group are the linear combinations of the elements of the symmetric group $S_n\subset   M_d(F)^{\otimes n}$.  
 \end{proof} 
  That is, any evaluation of $G$ is a linear combination of permutations,  with coefficients multiples of the determinant, Formula~\eqref{JG}.
 
\begin{equation}\label{jjg}
J_G = \sum_{\sigma \in S_n} c_\sigma \sigma \quad \text{with} \quad c_\sigma \in \C \,.
\end{equation}
\begin{remark}\ \\
(1)\quad In particular if $\lambda\vdash d^2$  and $G=ST(\lambda)$  we will denote  $J_\lambda:=J_G$ in Eq.~\eqref{jjg}.
We may apply this also to sequences because with Proposition~\ref{cse} we have 
\begin{equation}\label{conJ}
    J_{\tilde{\underline a}}  = (- 1)^{a_ia_j} \cdot  (i,j) J_{ \underline a } (i,j). 
\end{equation} 
 (2)\quad Since the algebra $\Sigma_k(F^d)$ spanned by the permutations in $\End((F^d)^{\otimes n})$ is semisimple, 
an element $\alpha\in \Sigma_k(F^d)$ is equal to 0 if and only if  $\tr(\sigma \alpha)=0,\ \forall \sigma\in S_n$. 
In fact we shall see in Section~\ref{sect:inWein} how one can recover $\alpha$  from the element $\Phi(\alpha):=\sum_{\sigma\in S_n}\tr(\sigma^{-1} \alpha)\sigma \in \Sigma_k(F^d)$.
\end{remark}

\section{The invariant exterior algebra $(\bigwedge M_d(F)^*)^G$}
Recall that the exterior algebra $\bigwedge U^*$, with $U$ a vector space,   
can be thought of as the space  of multilinear  alternating functions on $U$. 
Then exterior multiplication as functions is given by the Formula:
 $$f(x_1,\ldots,x_h)\in\bigwedge^h U^* ;g(x_1,\ldots,x_k)\in \bigwedge^k U^*,$$
$$
\ f\wedge g(x_1,\ldots , x_{h+k}):=\frac{1}{h!k!}\sum_{\sigma\in S_{h+k}}f(x_{\sigma(1)},\ldots,x_{\sigma(h)})  g(x_{\sigma(h+1)},\ldots,x_{\sigma(h+k)})
$$
\begin{equation}\label{ep}
\stackrel{\eqref{alter}}=\frac{1}{h!k!}Alt_{x_1,\ldots,x_{h+k}} f(x_1,\ldots,x_h) g(x_{ h+1 },\ldots,x_{ h+k })\in \bigwedge^{h+k} U^*.
\end{equation}

We recall the following facts. Also see Section $1.24$ in Ref.~\cite{procesi2020note} and Ref.~\cite{Procesi2007LieGroups}.

\begin{theorem}
The invariants of $n \times n$ matrices are generated by elements $\tr(M)$ where $M$ are monomials 
(of degree~$\leq n^2$ by Razmyslov). 
\end{theorem}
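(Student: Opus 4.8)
The plan is to reduce to multilinear invariants and then invoke Schur--Weyl duality. Since we work in characteristic~$0$, the standard process of \emph{polarization} and \emph{restitution} shows that the algebra of invariants is generated by its multilinear components: every invariant can be recovered by restitution from multilinear invariants, so it suffices to prove that every multilinear invariant is a polynomial in the single traces $\tr(M)$. Thus I would first fix $k$ and study the multilinear $GL(n)$--invariant functions $f(x_1,\ldots,x_k)$ on $M_n^{\times k}$, where $GL(n)$ acts by simultaneous conjugation.

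A multilinear invariant is exactly an invariant element of $(M_n^*)^{\ot k}$. Using the nondegenerate trace pairing $\langle A,B\rangle=\tr(AB)$ to identify $M_n^*\cong M_n$ as $GL(n)$--modules, and writing $M_n=\End(V)\cong V\ot V^*$ for $V=F^n$, the space of multilinear invariants becomes
$$
\big(V^{\ot k}\ot (V^*)^{\ot k}\big)^{GL(V)}\,.
$$
The heart of the argument is the First Fundamental Theorem, equivalently Schur--Weyl duality: this invariant space is spanned by the complete contractions $c_\sigma$ indexed by permutations $\sigma\in S_k$, where $c_\sigma$ pairs the $i$-th copy of $V^*$ with the $\sigma(i)$-th copy of $V$. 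Evaluated as a function of the matrix arguments, $c_\sigma$ becomes the \emph{trace monomial}
$$
\tr_\sigma(x_1,\ldots,x_k)=\prod_{(i_1\cdots i_\ell)\ \text{cycle of}\ \sigma}\tr\big(x_{i_1}x_{i_2}\cdots x_{i_\ell}\big)\,,
$$
a product of traces of ordinary monomials.

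Since every $\tr_\sigma$ is a product of single traces $\tr(M)$, the spanning statement shows that each multilinear invariant lies in the subalgebra generated by the $\tr(M)$; restitution preserves this form, as it only repeats variables inside the monomials, so by the first paragraph the whole invariant algebra is generated by the traces $\tr(M)$. I expect the main obstacle to be the Schur--Weyl step, which carries all the representation-theoretic content and is where the dimension $n$ of $V$ (rather than the apparent $n^2$ of $M_n$) really enters; the polarization reduction and the translation of $c_\sigma$ into a product of traces are then routine bookkeeping.

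Finally, for the degree bound I would invoke the Cayley--Hamilton theorem. Its full multilinearization is the \emph{fundamental trace identity}, which arises because the total antisymmetrizer $\sum_{\sigma\in S_{n+1}}\epsilon_\sigma\,\sigma$ vanishes on $V^{\ot(n+1)}$ once $\dim V=n$, so that $\bigwedge^{n+1}V=0$. This identity rewrites the trace of any sufficiently long monomial as a polynomial in traces of strictly shorter monomials, and iterating reduces every generator to traces of monomials of bounded length. The refined bound $\deg M\leq n^2$ is exactly Razmyslov's theorem, whose careful degree analysis is the delicate point I would cite rather than reproduce here.
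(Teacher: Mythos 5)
The paper itself gives no proof of this theorem: it is recalled as a known fact, with pointers to the literature (Section 1.24 of the cited note of Procesi and the Lie Groups book). Your argument is correct and is precisely the classical proof contained in those references --- polarization/restitution reduces to multilinear invariants, the First Fundamental Theorem (Schur--Weyl duality) identifies these with the span of the trace monomials $\tr_\sigma(x_1,\ldots,x_k)=\prod_{\text{cycles}}\tr(x_{i_1}\cdots x_{i_\ell})$, and the multilinearized Cayley--Hamilton identity together with Razmyslov's sharper analysis (which you rightly cite rather than reprove) yields the degree bound $n^2$.
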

Among these invariants the ones that are multilinear and alternating have a very special structure.\smallskip

In fact these invariants have an {\em exterior} multiplication. The algebra of these invariants, 
under exterior  multiplication,  can be identified  as the algebra of multilinear alternating 
functions $(\bigwedge M_d(F)^*)^G$. 
In turn this algebra can be identified to the cohomology of the unitary group. 
As all such cohomology algebras it is a Hopf algebra and  by Hopf's Theorem  it  is the exterior 
algebra generated by the  primitive elements. 
The  primitive elements  of   $(\bigwedge M_d(F)^* )^G$ are~\cite{Kostant2009}:

\begin{equation}\label{prime}
T_{2i-1}=T_{2i-1}(x_1,\ldots,x_{2i-1}):\stackrel{\eqref{st}}=\tr (St_{2i-1}(x_1,\ldots,x_{2i-1}))\,.
\end{equation} 
In particular, since these elements generate an exterior algebra, 
a product of elements $T_i$ is non zero if and only if the $T_i$ involved are all distinct. Given $k$ distinct $T_i$ their product depends on the order only 
up to  sign.

The $2^n$ different products form a basis of  $(\bigwedge M_d(F)^* )^G$.   
In dimension $d^2$ the only non-zero product of these elements containing $d^2$ variables is
 \begin{equation}\label{ILTD}
\mathcal T_d(x_1,x_2,\ldots,x_{d^2})=T_1\wedge T_3\wedge T_5\wedge \cdots \wedge T_{2d-1}.
\end{equation} Notice  that $\tr(St_{2i }(x_1,\ldots,x_{2i}))=0,\ \forall i$.\smallskip

As a consequence, we have:
\begin{proposition}\label{mmu}
Any multilinear  antisymmetric function of  $x_1,\ldots,x_{d^2 }$   is a multiple of \
$T_1\wedge T_3\wedge T_5\wedge \cdots \wedge T_{2d-1}$.
\end{proposition}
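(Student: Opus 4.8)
The plan is to deduce the statement from the one-dimensionality of the top exterior power, so that the entire content reduces to the non-vanishing of $\mathcal T_d$. View a multilinear antisymmetric scalar function $G(x_1,\ldots,x_{d^2})$ with $x_i\in M_d(F)$ as an element of $\bigwedge^{d^2}(M_d(F)^*)$. Since $M_d(F)$ has dimension $d^2$, the reformulation of Proposition~\ref{ilmulti} recorded above ($\dim\bigwedge^m V=1$ when $\dim V=m$) shows that this space is one-dimensional, spanned by the determinant $\det(x_1,\ldots,x_{d^2})$. Notice that the hypothesis imposes no invariance, yet this already pins $G$ down up to a scalar; indeed every multilinear antisymmetric function of $d^2$ variables is a multiple of the determinant, which by Remark~\ref{coni} is $GL(d)$-invariant, so the full space and its invariant subspace $(\bigwedge M_d(F)^*)^G$ coincide in top degree. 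It therefore suffices to show that $\mathcal T_d$ is a nonzero element of this one-dimensional space.

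First I would confirm that $\mathcal T_d=T_1\wedge T_3\wedge\cdots\wedge T_{2d-1}$ actually lies in $\bigwedge^{d^2}(M_d(F)^*)$. Each factor $T_{2i-1}=\tr(St_{2i-1})$ is, by Formula~\eqref{st}, the trace of the alternation of a monomial, hence multilinear and antisymmetric in its $2i-1$ arguments; and by the exterior-product Formula~\eqref{ep} the wedge of multilinear alternating functions is again multilinear and alternating. Counting degrees, the total number of variables is $1+3+\cdots+(2d-1)=d^2$, so $\mathcal T_d$ is a multilinear antisymmetric function of exactly $d^2$ variables, as required.

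The one substantive input is that $\mathcal T_d\neq0$, and this has already been secured in the discussion around Formula~\eqref{ILTD}: because $(\bigwedge M_d(F)^*)^G$ is an exterior algebra on the primitive generators $T_{2i-1}$, a product of such generators is nonzero precisely when none is repeated, and $\mathcal T_d$ uses each of $T_1,T_3,\ldots,T_{2d-1}$ exactly once. Granting this, $\mathcal T_d$ is a nonzero vector in a one-dimensional space and hence spans it, so every multilinear antisymmetric $G$ is a scalar multiple of $\mathcal T_d$; the vector-valued case follows verbatim from the vector-valued form of Proposition~\ref{ilmulti} with the scalar replaced by a constant $w\in W$. I do not anticipate a genuine obstacle: the only nontrivial fact, $\mathcal T_d\neq0$, is handed to us by the Hopf-algebra structure of the invariants, and the remainder is the elementary identity $\dim\bigwedge^{d^2}V=1$ together with a count of degrees.
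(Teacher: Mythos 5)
Your proof is correct and takes essentially the same route as the paper: Proposition~\ref{mmu} is stated there as an immediate consequence of Proposition~\ref{ilmulti} (the top exterior power $\bigwedge^{d^2}M_d(F)^*$ is one-dimensional, spanned by the determinant) together with the fact that $\mathcal T_d = T_1\wedge T_3\wedge\cdots\wedge T_{2d-1}$ is the unique non-zero degree-$d^2$ product of the generators $T_{2i-1}$, hence non-zero, which is precisely the argument you spell out, including the degree count $1+3+\cdots+(2d-1)=d^2$ and the observation that invariance need not be assumed. No gaps.
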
 

\begin{remark}\label{Iin} 
The function $\det( x_1,\ldots, x_{d^2})$  is an alternating invariant of matrices,
so it must have an expression as in Formula~\eqref{ILTD}.  
In fact the computable integer constant is known up to a sign~\cite{Formanek1987}:
\begin{equation}\label{costdis}\mathcal T_d
(x_1, \dots, x_{d^2})=\mathcal{C}_d\det( x_1,\ldots, x_{d^2}),\quad 
\mathcal C_d:=\pm \frac{1!3!5!\cdots (2d-1)!}{1!2!\cdots (d-1)!}\in \Z.
\end{equation}
\end{remark}

\section{Tensor polynomial identities}\label{sect:TPI}
Let $\lambda = (h_1, \dots, h_n) \vdash k$ denote a partition of $k$ with $n$ parts. 
Consider a permutation $\sigma \in S_n$ that decomposes into $l$ cycles as $\sigma = \mu_1 \cdots \mu_l$. 
We write $j \in \mu$ if position $j$ is moved by cycle $\mu$ and let $\sigma$ act on $\lambda$ in the following manner,
\begin{equation}\label{eq:SnOnLambda}
 \sigma(\lambda) = (\sum_{i \in \mu_1} \lambda_i, \dots, \sum_{i' \in \mu_l} \lambda_{i'} )^{\downarrow}\vdash k
\end{equation}
where $\phantom{}^\downarrow$ means that the elements are arranged in a non-increasing order.
For example, given $\lambda = (6,4,3,1,1)\vdash 15$ and $\sigma = (2,5)(1,3,4)$ one obtains
$$
 \sigma(\lambda) = (4+1, 6+3+1)^{\downarrow} = (10,5)\,.
$$
We say that a partition $\mu$ is a {\em refinement} of $\lambda$ if there is a $\sigma$ such that $\sigma(\mu) = \lambda$.
 
Given two partitions  $\lambda_1 = (h_1, \dots, h_m) \vdash k_1,\  \lambda_2 = (\ell_1, \dots, \ell_n) \vdash k_2$ we set 
$\lambda_1\oplus\lambda_2\vdash k_1+k_2$ the partition $(h_1, \dots, h_m,\ell_1, \dots, \ell_n)^{\downarrow}$.
\bigskip

With these notations we can now state our main Theorem.
\begin{theorem}\label{main}
Let $\lambda\vdash k$. A tensor polynomial $ST(\lambda)(x_1,\ldots,x_k)$ is NOT  a TPI of $d\times d$  matrices if and only if  $k\leq d^2$  and $\lambda\oplus 1^{d^2-k}$  is a refinement of the partition $\delta_d $ of $d^2$ formed by the first $d$ odd integers.
\begin{equation}\label{deld}
\delta_d:=(2d-1,2d-3,\ldots, 9,7,5,3,1).
\end{equation}
\end{theorem}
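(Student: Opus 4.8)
The plan is to reduce everything to the top degree case $k = d^2$, where Proposition~\ref{lelej} applies and turns the question into linear algebra in the permutation algebra $\Sigma_n(F^d)$. By Remark~\ref{alv} we may assume $k \le d^2$ from the outset. The engine of the proof is the trace computation $\tr(\sigma \cdot ST(\lambda))$ for $\sigma \in S_n$: since $\tr(\sigma\,(X_1 \ot \dots \ot X_n)) = \prod_{\text{cycles } c \text{ of } \sigma} \tr\big(\prod_{i \in c} X_i\big)$ and each cyclic product is a single word whose degree is the cycle sum, applying $Alt_X$ and recognising the alternation of a product over disjoint variable blocks as an exterior product (Formula~\eqref{ep}) gives
\[
 \tr(\sigma \cdot ST(\lambda)) = c_\sigma\, T_{\mu_1} \wedge T_{\mu_2} \wedge \dots \wedge T_{\mu_l}, \qquad \mu = (\mu_1,\dots,\mu_l) = \sigma(\lambda),
\]
with $c_\sigma$ a nonzero rational constant; here I use that $Alt$ of any multilinear monomial is the standard polynomial, so $Alt\,\tr(\prod_{i\in c}X_i) = T_{\mu_c}$. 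By the structure of the primitives~\eqref{prime} this wedge is nonzero if and only if $\mu_1,\dots,\mu_l$ are \emph{distinct odd} integers, all $\le 2d-1$ (even degrees annihilate $T$, repeated or too-large degrees annihilate the wedge).

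For the base case $k = d^2$, Proposition~\ref{lelej} gives $ST(\lambda) = \det \cdot J_\lambda$ with $J_\lambda \in \Sigma_n(F^d)$, so $ST(\lambda)$ is a TPI iff $J_\lambda = 0$; by semisimplicity (Remark~\ref{Iin}(2)) this holds iff $\tr(\sigma J_\lambda) = 0$ for all $\sigma$, i.e.\ iff the wedges above all vanish. Since a set of distinct odd parts $\le 2d-1$ summing to $d^2$ must exhaust $\{1,3,\dots,2d-1\}$, the only attainable nonzero case is $\sigma(\lambda) = \delta_d$. Hence for $k=d^2$, $ST(\lambda)$ fails to be a TPI exactly when some $\sigma$ satisfies $\sigma(\lambda)=\delta_d$, i.e.\ when $\lambda$ refines $\delta_d$ — which is the statement since $\lambda \oplus 1^{0} = \lambda$.

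To descend from $d^2$ to a general $k \le d^2$ I would prove the reduction lemma: for $\mu \vdash k$ with $k+1 \le d^2$, $ST(\mu)$ is a TPI iff $ST(\mu \oplus (1))$ is a TPI; iterating it links $ST(\lambda)$ with $ST(\lambda \oplus 1^{d^2-k})$. Placing the new part in the last tensor slot (permissible by Proposition~\ref{cse}) and sorting the summation over which variable lands there yields a Laplace-type expansion
\[
 ST(\mu \oplus (1))(x_1,\dots,x_{k+1}) = \sum_{a=1}^{k+1} \pm\, ST(\mu)(x_1,\dots,\widehat{x_a},\dots,x_{k+1}) \ot x_a .
\]
If $ST(\mu)$ is a TPI every summand vanishes, giving the easy direction. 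For the converse, assuming $ST(\mu\oplus(1))$ vanishes identically, I would evaluate at linearly independent matrices $x_1,\dots,x_{k+1}$ — available precisely because $k+1 \le d^2 = \dim M_d$ — and apply $\mathrm{id}^{\ot n} \ot \phi$ to the last factor, where $\phi \in M_d^*$ is dual to $x_{k+1}$; this annihilates every term except $a=k+1$, leaving $\pm\, ST(\mu)(x_1,\dots,x_k)=0$. As alternating functions vanish on dependent tuples, $ST(\mu)$ is a TPI.

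The main obstacle is exactly this converse: one cannot simply test $ST(\mu)$ against the permutations $\sigma \in S_n$ when $k<d^2$, because $ST(\mu)(x)$ need not lie in $\Sigma_n(F^d)$ (already $St_2 = [x_1,x_2]$ is not a multiple of a permutation), so the clean semisimplicity argument is unavailable below top degree; the padding lemma is what restores it, and the hypothesis $k\le d^2$ enters crucially in securing one extra independent direction. Assembling the pieces, $ST(\lambda)$ fails to be a TPI iff $k \le d^2$ and (by the lemma) $ST(\lambda \oplus 1^{d^2-k})$ fails to be a TPI, iff (by the base case) $\lambda \oplus 1^{d^2-k}$ refines $\delta_d$ — completing the proof.
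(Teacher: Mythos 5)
Your proposal is correct, and its two core components coincide with the paper's: your trace computation $\tr(\sigma\, ST(\lambda)) = c_\sigma\, T_{\mu_1}\wedge\cdots\wedge T_{\mu_l}$ is exactly Lemma~\ref{lem:tr_sG}, and your handling of the top case $k=d^2$ via Proposition~\ref{lelej}, semisimplicity of $\Sigma_n(F^d)$ (Remark~\ref{Iin}), and the observation that distinct odd parts $\le 2d-1$ summing to $d^2$ must exhaust $\{1,3,\dots,2d-1\}$ is exactly Proposition~\ref{prop:refinement}. Where you genuinely diverge is the descent from general $k\le d^2$ to $d^2$ variables. The paper's Theorem~\ref{finalt} pads in one shot: if $f(x_1,\dots,x_k)$ is nonzero, it is nonzero at some canonical basis elements $u_1,\dots,u_k$, and evaluating the padded polynomial at the full basis $u_1,\dots,u_{d^2}$ one sees that the terms ending with the ordered tail $u_{k+1}\otimes\cdots\otimes u_{d^2}$ reproduce $f(u_1,\dots,u_k)\otimes u_{k+1}\otimes\cdots\otimes u_{d^2}$, while every other term carries a different tail, so linear independence forbids cancellation. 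You instead pad one box at a time, using the Laplace-type expansion $ST(\mu\oplus(1)) = \sum_a \pm\, ST(\mu)(x_1,\dots,\widehat{x_a},\dots,x_{k+1})\otimes x_a$ and isolating the summand $a=k+1$ by contracting the last tensor slot with a functional $\phi$ dual to $x_{k+1}$ inside a linearly independent $(k+1)$-tuple, which exists precisely because $k+1\le d^2$; vanishing on dependent tuples is then automatic for alternating multilinear maps. Both arguments are elementary and complete. The paper's buys a single clean statement valid for an arbitrary multilinear alternating $f$, with no expansion needed; yours buys the full chain of intermediate equivalences $ST(\lambda)\leftrightarrow ST(\lambda\oplus 1)\leftrightarrow\cdots\leftrightarrow ST(\lambda\oplus 1^{d^2-k})$, a more transparent easy direction (each Laplace summand vanishes termwise), and it replaces the choice of special evaluation points by a projection trick valid at any independent evaluation. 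You also correctly flagged the bookkeeping point that moving the added box to the last tensor slot only alters $ST$ by a sign and a permutation of tensor factors (Proposition~\ref{cse}), so TPI-ness is unaffected.
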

In particular, this means that the polynomial $ST(\lambda)(x_1,\ldots,x_k),\ \lambda\vdash k$ is NOT  a TPI of $d\times d$  matrices if and only if $ST(\lambda\oplus 1^{d^2-k})(x_1,\ldots,x_d^2) $ is NOT  a TPI.  
The proof will be given later in this Section~\ref{sect:TPI}. 
Thus, contrary to what happens for ordinary polynomial identities, we will need to understand first  the nature of multilinear alternating tensor polynomial $G( x_1,\ldots,x_{d^2})$  in $d^2$ matrix variables.

\bigskip
 
Recall that~\cite{Kostant2009} for the cycle $(k,\dots, 1) = (1, \dots, k)^{-1}$ we have:
\begin{equation}\label{eq:perm_to_traced_prod}
 \tr \big((k,\dots ,1) x_1 \ot \dots \ot x_k \big) = \tr(x_1 x_2 \dots x_k)\,. 
\end{equation}
 and furthermore
$
 T_i = \tr\big( St_i(X)\big)=tr(\sum_{\sigma\in S_i}\epsilon_\sigma \sigma \,  x_1 \ot \dots \ot x_i)
$.

\begin{lemma}\label{lem:tr_sG}
 Let $\lambda = (h_1, \dots, h_n) \vdash k$. 
 For $\sigma \in S_n$ with $l$ cycles, let $\sigma(\lambda) = (\mu_1, \dots, \mu_l)\vdash k$ be the partition of $k$ with $l$ parts as defined in Eq.~\eqref{eq:SnOnLambda}.
 Then for $G:=ST(\lambda)(x_1,\ldots,x_k)$ as in Formula  \eqref{stlam} we have $ \tr( \sigma G )=0$ if one of the $\mu_i$ is even or if the $\mu_i$ are not all distinct, otherwise:
 $$
    \tr( \sigma G ) =    \pm T_{\mu_1} \wedge \dots \wedge T_{\mu_l}\,.
 $$
\end{lemma}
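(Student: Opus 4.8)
The plan is to reduce the computation of $\tr(\sigma G)$ to an alternation of a scalar trace function and then to factor that function over the cycles of $\sigma$. Since $G = Alt_X M_{\underline a}$ and the map $\alpha \mapsto \tr(\sigma\,\alpha)$ is linear, I would first commute trace and alternation to get $\tr(\sigma G) = Alt_X \tr(\sigma M_{\underline a})$, so that the target becomes the multilinear alternating function obtained by alternating the single scalar $\tr(\sigma M_{\underline a})$. Writing $\sigma = c_1\cdots c_l$ in cycles and $M_{\underline a} = X_1\ot\cdots\ot X_n$ with the blocks $X_i$ of sizes $h_i$, I would then apply Formula~\eqref{eq:perm_to_traced_prod} cycle by cycle: the trace of a permutation against a decomposable tensor factorizes over the cycles, giving $\tr(\sigma M_{\underline a}) = \prod_{j=1}^l \tr\big(\prod_{i\in c_j} X_i\big)$. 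Because the blocks partition $\{1,\dots,k\}$ and each cycle $c_j$ collects a disjoint set of blocks, the $j$-th factor $Q_j$ is a single monomial trace of degree $\mu_j = \sum_{i\in c_j} h_i$ in a variable set $S_j$, and $\{1,\dots,k\} = \bigsqcup_j S_j$.

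It then remains to alternate a product $Q_1\cdots Q_l$ of monomials supported on disjoint variable sets. Here I would use the absorption identity $Alt_X(f^\rho) = \epsilon_\rho\, Alt_X(f)$ for $\rho\in S_k$. Setting $A_j := Alt_{x_{S_j}} Q_j$, the product expands as $A_1\cdots A_l = \sum_{\rho\in S_{S_1}\times\cdots\times S_{S_l}} \epsilon_\rho\,(Q_1\cdots Q_l)^\rho$, so that $Alt_X(A_1\cdots A_l) = (\mu_1!\cdots\mu_l!)\,Alt_X(Q_1\cdots Q_l)$. Comparing with the iterated exterior-product Formula~\eqref{ep}, namely $A_1\wedge\cdots\wedge A_l = \tfrac{1}{\mu_1!\cdots\mu_l!}\,Alt_X(A_1\cdots A_l)$, I obtain $\tr(\sigma G) = Alt_X(Q_1\cdots Q_l) = A_1\wedge\cdots\wedge A_l$. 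Finally each $A_j$ is the alternation of a degree-$\mu_j$ monomial trace, hence equals $\pm T_{\mu_j}$ by definition~\eqref{prime}, the sign recording the reordering of $S_j$ into increasing order; this yields $\tr(\sigma G) = \pm\, T_{\mu_1}\wedge\cdots\wedge T_{\mu_l}$.

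The two vanishing cases then follow from the exterior-algebra structure recalled before the lemma. If some $\mu_i$ is even, then $T_{\mu_i} = \tr(St_{\mu_i}) = 0$ by cyclic invariance of the trace, so the whole wedge vanishes. If the $\mu_i$ are not all distinct, two of the odd-degree primitive generators coincide, and $T_m\wedge T_m = 0$ by anticommutativity, so again the product is zero. Otherwise the $\mu_i$ are odd and pairwise distinct, the generators $T_{\mu_1},\dots,T_{\mu_l}$ are distinct primitives, and their wedge is a nonzero basis element, giving precisely $\pm T_{\mu_1}\wedge\cdots\wedge T_{\mu_l}$.

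I expect the main obstacle to be the bookkeeping of the third step: verifying that the full alternation over $S_k$ collapses exactly to the wedge of the within-block alternations with no stray combinatorial factor, and keeping track of the overall sign coming from the cyclic ordering inside each trace and from sorting $S_j$. The cycle factorization and the vanishing phenomena themselves are not delicate, since they are immediate consequences of Formula~\eqref{eq:perm_to_traced_prod} and of the facts about $(\bigwedge M_d(F)^*)^G$ already established; the only genuine work is confirming the clean passage from $Alt_X$ of a product of disjointly-supported monomials to the exterior product.
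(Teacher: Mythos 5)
Your proof is correct and follows essentially the same route as the paper's: commute the trace past the alternation, factor $\tr(\sigma M_{\underline a})$ over the cycles of $\sigma$ via Formula~\eqref{eq:perm_to_traced_prod}, and identify the alternation of the resulting product of disjointly-supported monomial traces with $\pm T_{\mu_1}\wedge\cdots\wedge T_{\mu_l}$ --- the paper merely asserts this last combinatorial step, which you verify explicitly with the absorption identity and the $\mu_1!\cdots\mu_l!$ bookkeeping. One caution: your closing claim that the wedge is then a \emph{nonzero} basis element overshoots the lemma and is false in general (if some $\mu_i\geq 2d+1$ then $T_{\mu_i}=0$ by Amitsur--Levitzki, cf.\ the Corollary following the lemma), but since the lemma only asserts the equality, this does not affect the correctness of your argument.
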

\begin{proof}  In Formula~\eqref{stlam} set 
$$X_1:=x_{1}x_{2}\cdots x_{h_1},\ X_2:= x_{h_1+1}x_{h_1+2}\cdots x_{h_1+h_2},\  \ldots ,X_n:=  x_{k-h_n+1}x_{ k-h_n+2}\cdots x_{ k}  $$
 For each summand in $G$, the trace factorizes over the parts of 
 $\sigma(\lambda)$. 
 
 $$ 
 \tr\big(\sigma ST(\lambda \big) = 
    \tr\big(  \sigma  Alt_X X_1\otimes X_2\otimes\ldots\otimes X_n)= 
  Alt_X  \tr\big(  \sigma  X_1\otimes X_2\otimes\ldots\otimes X_n)
$$
 With the help of Eq.~\eqref{eq:perm_to_traced_prod}, one has
 \begin{equation}\label{mum}
 \tr\big(  \sigma  X_1\otimes X_2\otimes\ldots\otimes X_n)= \tr\big(  M_1)\tr\big(  M_2)\ldots \tr\big(  M_l)
\end{equation} where the $M_i$  are monomials in the $X_j$ and hence monomials in the variables $x_1,\ldots,x_k$  of lengths given by the partition $\sigma(\lambda) = (\mu_1, \dots, \mu_l)\vdash k$. When we alternate we have the exterior product of the alternation of the elements  $ \tr\big(  M_i)$. This is either 0 or,  up to reordering the variables (which may introduce a sign)  equals $T_{\mu_1} \wedge \dots \wedge T_{\mu_l}$  with $\mu_i$ the degree of $M_i$  in the variables $x_1,\ldots,x_k$.
This ends the proof.
 \end{proof}

\begin{corollary}
 If $\max\{\sigma(\lambda)\} \geq 2d$ then $\tr(\sigma ST(\lambda)) = 0$.
\end{corollary}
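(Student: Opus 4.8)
The plan is to read the result straight off Lemma~\ref{lem:tr_sG}. Write $\sigma(\lambda)=(\mu_1,\dots,\mu_l)$ and assume $\max\{\sigma(\lambda)\}\ge 2d$, so that $\mu_i\ge 2d$ for some index $i$. The lemma already splits the computation of $\tr(\sigma G)$ with $G=ST(\lambda)$ into two regimes: either some $\mu_j$ is even or the $\mu_j$ fail to be distinct, in which case $\tr(\sigma G)=0$ outright; or all $\mu_j$ are odd and distinct, in which case $\tr(\sigma G)=\pm\,T_{\mu_1}\wedge\dots\wedge T_{\mu_l}$. In the first regime there is nothing left to prove, so the whole statement reduces to the single claim that the factor $T_{\mu_i}$ appearing in the wedge product vanishes whenever $\mu_i\ge 2d$.

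The key step is therefore to establish that $T_j=\tr(St_j(x_1,\dots,x_j))=0$ on $M_d$ for every $j\ge 2d$. Here I would invoke the Amitsur--Levitzki Theorem, which guarantees that $St_{2d}$ is a PI of $d\times d$ matrices, together with the fact that a standard polynomial stays a PI in every higher degree. The latter follows from the Laplace-type expansion $St_{j}(x_1,\dots,x_j)=\sum_{r=1}^{j}(-1)^{r-1}x_r\,St_{j-1}(x_1,\dots,\widehat{x_r},\dots,x_j)$, which shows that if $St_{j-1}$ vanishes identically on $M_d$ then so does $St_j$; inducting upward from $j=2d$ gives $St_j=0$ on $M_d$ for all $j\ge 2d$, hence $T_j=\tr(St_j)=0$. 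Equivalently, one may argue that the invariant exterior algebra $(\bigwedge M_d(F)^*)^G$ is generated by the $d$ primitive elements $T_1,T_3,\dots,T_{2d-1}$ alone, so that no nonzero $T_j$ with $j\ge 2d$ can exist.

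With this in hand the corollary follows at once. In the nondegenerate regime, $\mu_i$ is odd and $\mu_i\ge 2d$, so $\mu_i\ge 2d+1$ and thus $T_{\mu_i}=0$; the wedge product $\pm\,T_{\mu_1}\wedge\dots\wedge T_{\mu_l}$ therefore contains a vanishing factor and is itself $0$. In the degenerate regime the trace already vanishes by the lemma. Either way $\tr(\sigma ST(\lambda))=0$.

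I expect the only genuine content to sit in the middle step, namely certifying that $T_j=0$ for \emph{all} $j\ge 2d$ rather than merely at the threshold $j=2d$ supplied directly by Amitsur--Levitzki. Once the standard-polynomial recursion (or, equivalently, the completeness of the list $T_1,\dots,T_{2d-1}$ of primitive generators) is granted, the statement is a one-line consequence of Lemma~\ref{lem:tr_sG}.
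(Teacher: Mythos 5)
Your proof is correct and follows essentially the same route as the paper: reduce via Lemma~\ref{lem:tr_sG} to a wedge product of the $T_{\mu_i}$ and kill the factor with $\mu_i \geq 2d$ using the Amitsur--Levitzki Theorem. The only difference is that you make explicit (via the Laplace-type recursion for $St_j$) why standard polynomials of degree strictly greater than $2d$ also vanish on $M_d$, a step the paper simply cites as known.
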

\begin{proof}
 Then $\tr(\sigma ST(\lambda))$ is proportional to the trace of a standard polynomial in more than $2d$ variables, which by the Amitsur-Levitzky Theorem is known to vanish.
 This ends the proof.
\end{proof}

\begin{proposition}\label{prop:refinement} If $\lambda\vdash d^2$  then 
  $ST(\lambda) \neq 0$ if and only if $\lambda$ is a refinement of $\delta_d = (2d-1, \dots, 3,1)$.
\end{proposition}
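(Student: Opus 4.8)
The plan is to reduce the non-vanishing of $ST(\lambda)$ to a purely combinatorial condition on $\lambda$ by pairing it against the permutations $\sigma \in S_n$ through the trace. First I would invoke Proposition~\ref{lelej} to write the evaluation $ST(\lambda)(x_1,\ldots,x_{d^2}) = \det(x_1,\ldots,x_{d^2})\,J_\lambda$ with $J_\lambda \in \Sigma_n(F^d)$. The determinant $\det(x_1,\ldots,x_{d^2})$ is not identically zero on $M_d$ (choosing the $x_i$ to be the elementary matrices $e_{i,j}$ gives $\det = \pm 1$), so $ST(\lambda)$ is nonzero as a function if and only if $J_\lambda \neq 0$.

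Next, using the semisimplicity of $\Sigma_n(F^d)$ recorded in the remark following Proposition~\ref{lelej}, we have $J_\lambda \neq 0$ if and only if $\tr(\sigma J_\lambda) \neq 0$ for some $\sigma \in S_n$, equivalently $\tr(\sigma\,ST(\lambda)) = \det(x_1,\ldots,x_{d^2})\,\tr(\sigma J_\lambda)$ is not the zero function for some $\sigma$. Here Lemma~\ref{lem:tr_sG} does the heavy lifting: it identifies $\tr(\sigma\,ST(\lambda))$, up to sign, with the exterior product $T_{\mu_1}\wedge\cdots\wedge T_{\mu_l}$, where $\sigma(\lambda) = (\mu_1,\ldots,\mu_l)$ as in Eq.~\eqref{eq:SnOnLambda}. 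This product is nonzero precisely when the $\mu_i$ are pairwise distinct and each $T_{\mu_i}$ is a nonzero primitive. Recalling from Eqs.~\eqref{prime}--\eqref{ILTD} (and the Corollary to Lemma~\ref{lem:tr_sG}) that $T_j$ vanishes for even $j$ and, by Amitsur--Levitzki, for odd $j \geq 2d+1$, while the nonzero primitives are exactly $T_1, T_3, \ldots, T_{2d-1}$, I conclude that the product is nonzero if and only if $\mu_1,\ldots,\mu_l$ are distinct odd integers all lying in $\{1,3,\ldots,2d-1\}$.

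The combinatorial crux is then a counting observation. The parts $\mu_i$ sum to $|\lambda| = d^2$, and $d^2 = 1 + 3 + \cdots + (2d-1)$ is exactly the sum of \emph{all} the odd integers in $\{1,3,\ldots,2d-1\}$. A subset of $\{1,3,\ldots,2d-1\}$ whose elements sum to the total sum of the set must be the whole set; hence $\{\mu_1,\ldots,\mu_l\} = \{1,3,\ldots,2d-1\}$ and $\sigma(\lambda) = \delta_d$. Conversely, whenever $\sigma(\lambda) = \delta_d$ the corresponding trace equals $\pm\,T_1\wedge T_3\wedge\cdots\wedge T_{2d-1} = \pm\,\mathcal{T}_d \neq 0$. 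Therefore $J_\lambda \neq 0$ if and only if there is a $\sigma \in S_n$ with $\sigma(\lambda) = \delta_d$, which by the definition in Eq.~\eqref{eq:SnOnLambda} means exactly that $\lambda$ is a refinement of $\delta_d$, completing both implications.

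I expect the main obstacle to be organizational rather than conceptual: one must be careful that the trace pairing genuinely detects non-vanishing, which rests simultaneously on the semisimplicity remark and on $\det \not\equiv 0$, and that ``distinct odd parts each $\leq 2d-1$'' is precisely the right non-vanishing criterion for $T_{\mu_1}\wedge\cdots\wedge T_{\mu_l}$. The decisive step is the elementary counting fact that the only subset of the first $d$ odd numbers summing to $d^2$ is the full set, which is what pins down $\delta_d$ uniquely and turns the analytic non-vanishing condition into the refinement condition.
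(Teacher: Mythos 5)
Your proof is correct and follows essentially the same route as the paper's: pairing $ST(\lambda)$ against permutations via the trace criterion, invoking Lemma~\ref{lem:tr_sG} to identify $\tr(\sigma\, ST(\lambda))$ with $\pm T_{\mu_1}\wedge\cdots\wedge T_{\mu_l}$, and using the exterior-algebra structure of the invariants to conclude that the only nonvanishing product in $d^2$ variables is $T_1\wedge T_3\wedge\cdots\wedge T_{2d-1}$, i.e.\ that $\sigma(\lambda)=\delta_d$. The extra details you supply (the careful reduction through $J_\lambda$ and $\det\not\equiv 0$, and the counting fact that a subset of $\{1,3,\ldots,2d-1\}$ summing to $d^2$ must be the whole set) simply make explicit what the paper's terser proof leaves implicit.
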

\begin{proof}
  $ST(\lambda) = 0$ if and only if $\tr(\sigma ST(\lambda)) = 0$ for all $\sigma \in S_n$. By Lemma~\ref{lem:tr_sG}, 
  $\tr(\sigma ST(\lambda))$ is the alternating product
  $$
       \tr(\sigma ST(\lambda)) = \pm T_{\mu_1} \wedge \dots \wedge T_{\mu_l}\,.
  $$
  By Remark \ref{alv}, this expression vanishes except when $\sigma(\lambda) = \delta_d$,
  i.e. when $\lambda$ is a refinement of~$\delta_d$.
  This ends the proof.
\end{proof}

\medskip

In fact we have a more general statement for any multilinear antisymmetric polynomial in $k \leq d^2$ variables  $f(x_1,\ldots,x_k)$:
\begin{theorem}\label{finalt}
$f(x_1,\ldots,x_k)$ is non zero when evaluated in $d\times d$ matrices if and only if the alternating polynomial in $d^2$ variables  
$$F(x_1,\ldots,x_k,\ldots,x_{d^2}):=Alt_X f(x_1,\ldots,x_k)\otimes x_{k+1}\otimes x_{k+2}\otimes \ldots \otimes x_{d^2}$$ is non zero.
\end{theorem}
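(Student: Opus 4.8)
The plan is to prove the two implications separately, the forward one ($f\neq 0\Rightarrow F\neq 0$) being the substantial direction. The reverse implication is immediate by contraposition: if $f$ vanishes identically on $M_d$, then so does $f\otimes x_{k+1}\otimes\cdots\otimes x_{d^2}$, and hence $F=Alt_X(\cdots)=0$. To detect non-vanishing in the forward direction I would pair with the trace form, which is nondegenerate on every matrix algebra $M_d^{\otimes m}=\End((F^d)^{\otimes m})$: an element is zero if and only if its trace against every \emph{simple} tensor $a_1\otimes\cdots\otimes a_m$ vanishes, since those span. Applied to the values of $f$, this says that $f\neq 0$ if and only if the scalar function
\[
\Theta(x_1,\ldots,x_k;y_1,\ldots,y_n):=\tr\big((y_1\otimes\cdots\otimes y_n)\,f(x_1,\ldots,x_k)\big)
\]
is not identically zero; and for each fixed tuple $y=(y_1,\ldots,y_n)$ the form $\Theta(\,\cdot\,;y)$ is multilinear and alternating in $x_1,\ldots,x_k$, i.e.\ an element of $\bigwedge^k M_d^*$.

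Assuming $f\neq 0$, I would first fix $y^0$ with $\omega:=\Theta(\,\cdot\,;y^0)\neq 0$ in $\bigwedge^k M_d^*$, and then contract $F$ against the simple tensor $A=(y^0_1\otimes\cdots\otimes y^0_n)\otimes(z_1\otimes\cdots\otimes z_{d^2-k})\in M_d^{\otimes N}$, where $N=n+(d^2-k)$ and the $z_j\in M_d$ remain to be chosen. Since $A$ is constant and the trace over $M_d^{\otimes N}$ factorizes across the $f$-block and the extra slots, the unalternated integrand is
\[
\tr\big(A\,(f\otimes x_{k+1}\otimes\cdots\otimes x_{d^2})\big)=\Theta(x_1,\ldots,x_k;y^0)\prod_{j=1}^{d^2-k}\tr(z_j\,x_{k+j}).
\]
Writing $\ell_j:=\tr(z_j\,\cdot\,)\in M_d^*$ and using that $Alt_X$ of a product of forms in disjoint variable blocks is, up to a nonzero combinatorial constant, their exterior product (Formula~\eqref{ep}), I obtain
\[
\tr(A F)=\mathrm{const}\cdot\big(\omega\wedge\ell_1\wedge\cdots\wedge\ell_{d^2-k}\big)\in\textstyle\bigwedge^{d^2}M_d^*,
\]
a scalar multiple of $\det(x_1,\ldots,x_{d^2})$.

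The last step is to choose the $z_j$ so that this top form does not vanish. This rests on the elementary fact that any nonzero $k$-form on the $m$-dimensional space $M_d$ (with $m=d^2\ge k$, which is exactly where the hypothesis $k\le d^2$ enters) can be completed to a nonzero element of $\bigwedge^m M_d^*$ by wedging with $m-k$ linear forms: expanding $\omega$ in a basis of $\bigwedge^k M_d^*$, one picks a multi-index $I_0$ with nonzero coefficient and takes the $\ell_j$ to be the dual covectors indexed by the complement of $I_0$, whereupon all other terms vanish by repetition and a nonzero multiple of the top form survives. Realizing each covector as $\tr(z_j\,\cdot\,)$ through the isomorphism $M_d\cong M_d^*$ fixes the $z_j$; then $\tr(AF)\not\equiv 0$, and therefore $F\neq 0$, which completes the argument.

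I expect the one genuinely delicate point to be the choice of \emph{what} to contract $F$ against. Tracing $F$ against permutations alone---the move suggested by Lemma~\ref{lem:tr_sG} and Proposition~\ref{prop:refinement}---does not suffice, since the non-vanishing of $f$ can be invisible to permutation contractions (for instance $f=St_2=[x_1,x_2]$ has $\tr(f)=0$ while $f\neq 0$). The device that removes this obstacle is to use the $d^2-k$ extra tensor slots of $F$ to carry \emph{arbitrary} linear probes $z_j$ instead of permutation data: this decouples the detection of $f$ (packaged in the factor $\Theta$) from the combinatorial task of reaching the top exterior power (the wedge with the $\ell_j$), reducing the whole forward direction to the completion fact for exterior forms stated above.
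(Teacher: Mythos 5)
Your proof is correct, but it takes a genuinely different route from the paper's. The paper's own argument is purely elementary: since $f$ is multilinear, Remark~\ref{alv}(2) provides canonical basis matrices $u_1,\ldots,u_k$ (among the $e_{i,j}$) with $f(u_1,\ldots,u_k)\neq 0$; evaluating $F$ at these together with the remaining $d^2-k$ basis matrices $u_{k+1},\ldots,u_{d^2}$, the only summands of the alternation whose last $d^2-k$ tensor slots carry exactly $u_{k+1}\otimes\cdots\otimes u_{d^2}$ are those permuting $u_1,\ldots,u_k$ inside the $f$-block, and these add up to $k!\,f(u_1,\ldots,u_k)\otimes u_{k+1}\otimes\cdots\otimes u_{d^2}$, while every other summand ends in a different basis simple tensor; linear independence of those tensors then gives $F(u_1,\ldots,u_{d^2})\neq 0$ directly, with no pairing and no exterior algebra. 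You instead dualize: you detect the nonvanishing of $f$ through a trace probe $y^0$, reducing to a nonzero scalar form $\omega\in\bigwedge^k M_d^*$, then invoke the completion fact (wedging a nonzero $k$-form with $d^2-k$ dual covectors to reach a nonzero top form), realizing the covectors as $\tr(z_j\,\cdot)$ via the trace-form isomorphism and using Formula~\eqref{ep} to convert the alternation into a wedge. Both arguments are sound, and both exploit the free tail slots in the same spirit (the paper completes a basis, you complete a form). The paper's proof is shorter and self-contained; what yours buys is a scalar certificate --- an explicit constant tensor $A$ with $\tr(AF)$ a nonzero multiple of $\det(x_1,\ldots,x_{d^2})$ --- which ties the theorem to the exterior-algebra and duality formalism used elsewhere in the paper ($\mathcal{T}_d$, $\Phi$, the Weingarten expansion), and your closing observation that contracting against permutations alone cannot detect a general $f$ in $k<d^2$ variables is a correct and worthwhile clarification of why some such device (basis evaluation or arbitrary probes) is genuinely needed.
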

\begin{proof} Clearly we only need to show that, if  $f(x_1,\ldots,x_k)$ is non zero, 
then $F(x_1,\ldots,x_k,\ldots,x_{d^2})$ is non zero.  
If  $f(x_1,\ldots,x_k)$ is non zero, then there are $k$ elements $u_1,\ldots,u_k$ out of the canonical basis  $e_{i,j}$ 
so that $f(u_1,\ldots,u_k)\neq 0$. 
Then when we compute  $F(u_1,\ldots,u_k,\ldots,u_{d^2})$ we see that
$$F(u_1,\ldots,u_k,\ldots,u_{d^2})=f(u_1,\ldots,u_k)\otimes u_{k+1}\otimes u_{k+2}\otimes \ldots \otimes u_{d^2}+  O.T.$$ where the other terms $  O.T.$ do not end with $u_{k+1}\otimes u_{k+2}\otimes \ldots \otimes u_{d^2}$  hence $F(u_1,\ldots,u_k,\ldots,u_{d^2})\neq 0$.

\end{proof} This is clearly the content of the main Theorem \ref{main} which we repeat:
\begin{theorem}\label{main1}
Let $\lambda\vdash k$. A tensor polynomial $ST(\lambda)(x_1,\ldots,x_k)$ is NOT a TPI of $d\times d$  matrices if and only if  $k\leq d^2$  and $\lambda\oplus 1^{d^2-k}$  is a refinement of the partition of $d^2$ formed by the first $d$ odd integers.
\begin{equation}\label{deld1}
\delta_d:=(2d-1,2d-3,\ldots, 9,7,5,3,1)\,.
\end{equation}
\end{theorem}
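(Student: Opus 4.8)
The plan is to assemble the machinery already developed, reducing the statement to Proposition~\ref{prop:refinement} by way of Theorem~\ref{finalt}. Recall that ``$ST(\lambda)$ is not a TPI'' means precisely that $ST(\lambda)$, viewed as a multilinear alternating map $M_d^{\,k}\to M_d^{\otimes n}$, does not vanish identically on $d\times d$ matrices. I would first settle the range of $k$: if $k>d^2$, then $ST(\lambda)$ is multilinear and antisymmetric in more than $d^2=\dim M_d$ variables, so by Remark~\ref{alv}(3) it vanishes on $M_d$ and is therefore a TPI. Hence ``not a TPI'' forces $k\le d^2$, which matches the first clause of the statement.

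For the main case $k\le d^2$, the idea is to pad $\lambda$ with singleton parts up to a partition of $d^2$ and invoke Theorem~\ref{finalt} with $f:=ST(\lambda)$. The function produced there is $F=Alt_X\big(ST(\lambda)(x_1,\ldots,x_k)\otimes x_{k+1}\otimes\cdots\otimes x_{d^2}\big)$, and the key computation is to identify it, up to a nonzero scalar, with $ST(\lambda\oplus 1^{d^2-k})$. Since $ST(\lambda)=Alt_{x_1,\ldots,x_k}M_\lambda$ is already alternating in its first $k$ arguments, composing the outer full alternation with this inner one merely contributes the factor $k!$: writing $h:=M_\lambda\otimes x_{k+1}\otimes\cdots\otimes x_{d^2}$ one gets $F=Alt_{x_1,\ldots,x_{d^2}}Alt_{x_1,\ldots,x_k}h=k!\,Alt_{x_1,\ldots,x_{d^2}}h=k!\,ST(\lambda\oplus 1^{d^2-k})$. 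In particular $F\neq 0$ if and only if $ST(\lambda\oplus 1^{d^2-k})\neq 0$.

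It then remains to chain the equivalences. By Theorem~\ref{finalt}, $ST(\lambda)$ is nonzero on $d\times d$ matrices if and only if $F$ is nonzero, hence if and only if $ST(\lambda\oplus 1^{d^2-k})\neq 0$. Since $\lambda\oplus 1^{d^2-k}\vdash d^2$, Proposition~\ref{prop:refinement} converts this into the refinement condition: it holds precisely when $\lambda\oplus 1^{d^2-k}$ is a refinement of $\delta_d=(2d-1,\ldots,3,1)$, the partition of $d^2$ into the first $d$ odd integers. Combining with the first paragraph yields the stated characterization.

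I expect the only genuinely delicate point to be the bookkeeping for the scalar $k!$ together with the verification that padding by $1^{d^2-k}$ is exactly the $\oplus$ operation of the statement, i.e.\ that alternating $M_\lambda\otimes x_{k+1}\otimes\cdots\otimes x_{d^2}$ reproduces $ST(\lambda\oplus 1^{d^2-k})$ after reordering the parts into non-increasing order as in~\eqref{eq:SnOnLambda}. The substantive content---that nonvanishing is governed by refinement of $\delta_d$---has already been extracted in Proposition~\ref{prop:refinement} via Lemma~\ref{lem:tr_sG} and the Hopf-algebra description of $(\bigwedge M_d(F)^*)^G$, so the remaining argument is essentially a reduction rather than new analysis.
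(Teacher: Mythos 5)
Your proposal is correct and follows the paper's own route exactly: Remark~\ref{alv} for the case $k>d^2$, then Theorem~\ref{finalt} applied to $f=ST(\lambda)$ to reduce to $d^2$ variables, and Proposition~\ref{prop:refinement} to convert nonvanishing into the refinement condition on $\delta_d$. In fact you make explicit the step the paper leaves implicit (that $F=k!\,ST(\lambda\oplus 1^{d^2-k})$, since the padded monomial is already in non-increasing order and the inner alternation only contributes the factor $k!$), so nothing further is needed.
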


\begin{example}
Figure~\ref{fig:Hasse2} shows all partitions $\lambda \vdash k\leq 9$ for which $ST(\lambda)(x_1, \dots, x_9)$ is NOT a TPI for $3 \times 3$ matrices.
\end{example}

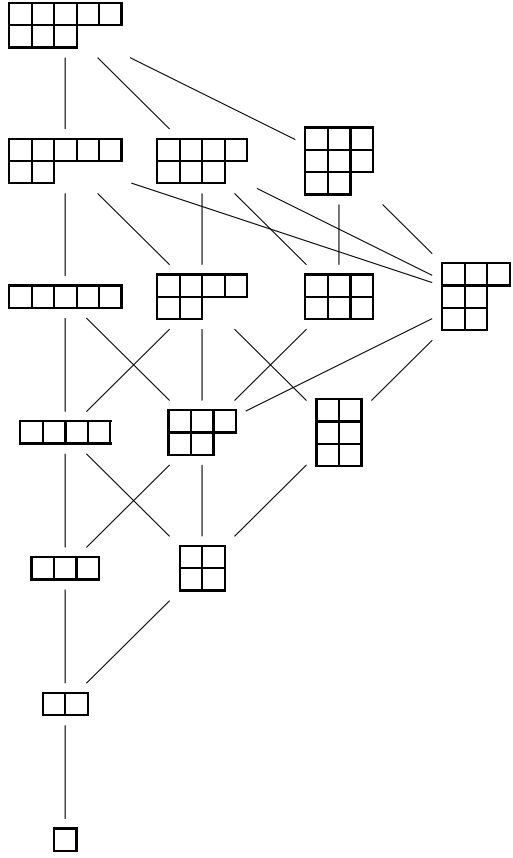
\begin{figure}
\begin{tikzpicture}[scale=1.8]
\ytableausetup{smalltableaux}
 \node (53) at (0,0) {\ydiagram{5,3}};
 \node (52) at (0,-1) {\ydiagram{5,2}};
 \node (5) at (0,-2) {\ydiagram{5}};
 \node (4) at (0,-3) {\ydiagram{4}};
 \node (3) at (0,-4) {\ydiagram{3}};
 \node (2) at (0,-5) {\ydiagram{2}};
 \node (1)  at (0,-6) {\ydiagram{1}};
 
 \node (43) at (1,-1) {\ydiagram{4,3}};
 \node (42) at (1,-2) {\ydiagram{4,2}};
 \node (32) at (1,-3) {\ydiagram{3,2}};
 \node (222) at (2,-3) {\ydiagram{2,2,2}};
 
 \node (22) at (1,-4) {\ydiagram{2,2}};
 
 \node (332) at (2,-1) {\ydiagram{3,3,2}};
 
 \node (33) at (2,-2) {\ydiagram{3,3}};
 \node (322) at (3,-2) {\ydiagram{3,2,2}};
 
 \draw (53) -- (52) -- (5) -- (4) -- (3) -- (2) -- (1);
 \draw (53) -- (43) -- (42) -- (32) -- (22);
 
 \draw (42) -- (4);
 \draw (22) -- (2);

 \draw (42) -- (222) -- (22);
 
 \draw (53) -- (332) -- (33);
 \draw (33) -- (32) -- (3);
 \draw (332) -- (322) -- (222);
 \draw (322) -- (32);
 \draw (43) -- (33);
 \draw (52) -- (42);
 \draw (4) -- (22);
 \draw (5) -- (32);
 \draw (43) -- (322);
 \draw (52) -- (322);
 \end{tikzpicture}
\caption{
\label{fig:Hasse2}
Partitions such that $\lambda \oplus 1^{d^2-k}$ are refinements of $(5,3,1)$,
corresponding to the all alternating tensor polynomials $ST(\lambda)$ that are NOT TPI in $k < d^2 = 9$ variables.
Two Young tableaux are connected if one of them is a refinement of the other.
}
\end{figure} 

Let us consider an interesting class of TPI formed by rectangular partitions, namely $ST(m^n)$ as defined in Eq.~\eqref{ba}.
Let $d,m\leq 2d$ be two integers and define
$$ [d;m]=[(2d-1)/m]+ [(2d-3)/m]+\ldots+[9/m]+[7/m]+[5/m]+[3/m]+[1/m]$$where $[x]$  denotes as usual the {\em integer part} of $x$.
For example
$$[2;2]=1,\  [3;2]=3,\ [5;2]=10,\ [2;3]=1;\  [3;3]=2,\ [5;3]=7.$$
\begin{proposition}\label{ad}
$ST(m^n)$ is a TPI  for $d\times d$ matrices if and only if $n> [d;m]$.
\end{proposition}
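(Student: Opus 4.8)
The plan is to apply the Main Theorem~\ref{main} to the rectangular partition $m^n$ and to translate the resulting refinement condition into an elementary counting statement about writing odd integers as combinations of $m$'s and $1$'s.

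First I would restate the goal via Theorem~\ref{main}: $ST(m^n)$ fails to be a TPI precisely when $mn \le d^2$ and $m^n \oplus 1^{d^2-mn}$ is a refinement of $\delta_d = (2d-1, 2d-3, \ldots, 3, 1)$. So it suffices to show that this combined condition is equivalent to $n \le [d;m]$, whence taking negations gives the claimed statement $n > [d;m]$.

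Second I would unpack the refinement condition in this special case. The parts of $m^n \oplus 1^{d^2-mn}$ are $n$ copies of $m$ together with $d^2 - mn$ copies of $1$; a refinement of $\delta_d$ groups these parts into $d$ blocks whose sums are the odd numbers $2j-1$, for $j = 1, \dots, d$. Equivalently, for each $j$ one must write $2j-1 = a_j m + b_j$ with integers $a_j, b_j \ge 0$, subject to $\sum_j a_j = n$ (the copies of $m$) and $\sum_j b_j = d^2 - mn$ (the copies of $1$). Since $\sum_{j=1}^{d}(2j-1) = d^2$, the second constraint follows automatically once the first holds, so the only genuine requirement is to pick integers $a_j \ge 0$ with $b_j = (2j-1) - a_j m \ge 0$ and $\sum_j a_j = n$.

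The crux is then the observation that the admissible values of each individual $a_j$ are exactly the integers in $\{0, 1, \dots, [(2j-1)/m]\}$, so $\sum_j a_j$ can realize every integer from $0$ up to $\sum_j [(2j-1)/m] = [d;m]$, and nothing larger. Hence a valid refinement exists if and only if $0 \le n \le [d;m]$. Finally I would note that $n \le [d;m]$ already forces $mn \le d^2$, because $m\,[d;m] \le m \sum_j (2j-1)/m = d^2$, so the clause $mn \le d^2$ in Theorem~\ref{main} imposes no extra restriction. Combining, $ST(m^n)$ is not a TPI exactly when $n \le [d;m]$, that is, it is a TPI if and only if $n > [d;m]$. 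I expect no serious obstacle here; the only two points needing care are verifying that every intermediate value of $\sum_j a_j$ is attainable (immediate, as each $a_j$ runs over a full interval of integers) and confirming that the count of $1$'s is automatically correct via $\sum_j(2j-1)=d^2$.
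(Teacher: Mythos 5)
Your proof is correct and follows essentially the same route as the paper: invoke Theorem~\ref{main} for $m^n$, then translate the refinement of $\delta_d$ into choosing $a_j$ copies of $m$ inside each odd part $2j-1$ with $a_j \le [(2j-1)/m]$ and $\sum_j a_j = n$, giving the bound $n \le [d;m]$. The only difference is one of completeness: the paper dismisses the converse as ``clear,'' whereas you spell out the two points it rests on (attainability of every intermediate value of $\sum_j a_j$, and the fact that $\sum_j b_j = d^2 - mn$ and $mn \le d^2$ come for free), which is a welcome addition rather than a deviation.
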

 \begin{proof}
Assume that $m,n$ are such that $ST(m^n)$  is not a TPI for $d\times d$  matrices, then, by  Theorem \ref{main1}  we have $mn\leq d^2$  and the partition $m^n\oplus 1^{d^2-mn}$ is a refinement of $\delta_d$.  This means that  for each $i=1,\dots,d$  we have  $2i-1=mr_i+(2i-1-mr_i),\ mr_i\leq 2i-1$ and $\sum_ir_i=n$. In other words $r_i\leq [(2i-1)/m]$, and hence $n=\sum_ir_i\leq  [d; m]  $.

The converse is then clear.
\end{proof}

\begin{table}
 \begin{tabular}{@{}lll@{}}
\toprule
$d$ & $\lambda$\\
\midrule
$2$ &
$ 2 ^ 2 $,
$ 3 ^ 2 $,
$ 4 ^ 1 $
\\
$3$ &
$ 2 ^ 4 $,
$ 3 ^ 3 $,
$ 4 ^ 2 $,
$ 5 ^ 2 $,
$ 6 ^ 1 $
\\
$4$ &
$ 2 ^ 7 $,
$ 3 ^ 5 $,
$ 4 ^ 3 $,
$ 5 ^ 3 $,
$ 6 ^ 2 $,
$ 7 ^ 2 $,
$ 8 ^ 1 $
\\
$5$ & 
$ 2 ^ {11} $,
$ 3 ^ 8 $,
$ 4 ^ 5 $,
$ 5 ^ 4 $,
$ 6 ^ 3 $,
$ 7 ^ 3 $,
$ 8 ^ 2 $,
$ 9 ^ 2 $,
$ 10 ^ 1 $
\\
$6$ & 
$ 2 ^ {16}$,
$ 3 ^ {11} $,
$ 4 ^ 7 $,
$ 5 ^ 6 $,
$ 6 ^ 4 $,
$ 7 ^ 4 $,
$ 8 ^ 3 $,
$ 9 ^ 3 $,
$ 10 ^ 2 $,
$ 11 ^ 2 $,
$ 12 ^ 1 $,
\\
$7$ & 
$ 2 ^ {22} $,
$ 3 ^ {15} $,
$ 4 ^ {10} $,
$ 5 ^ 8 $,
$ 6 ^ 6 $,
$ 7 ^ 5 $,
$ 8 ^ 4 $,
$ 9 ^ 4 $,
$ 10 ^ 3 $,
$ 11 ^ 3 $,
$ 12 ^ 2 $,
$ 13 ^ 2 $,
$ 14 ^ 1 $\\
$8$ &
$ 2 ^ {29} $,
$ 3 ^ {20} $,
$ 4 ^ {13} $,
$ 5 ^ {11} $,
$ 6 ^ 8 $,
$ 7 ^ 7 $,
$ 8 ^ 5 $,
$ 9 ^ 5 $,
$ 10 ^ 4 $,
$ 11 ^ 4 $,
$ 12 ^ 3 $,
$ 13 ^ 3 $,
$ 14 ^ 2 $,
$ 15 ^ 2 $,
$ 16 ^ 1 $
\\
$9$ &
$ 2 ^ {37} $,
$ 3 ^ {25} $,
$ 4 ^ {17} $,
$ 5 ^ {14} $,
$ 6 ^ {10} $,
$ 7 ^ 9 $,
$ 8 ^ 7 $,
$ 9 ^ 6 $,
$ 10 ^ 5 $,
$ 11 ^ 5 $,
$ 12 ^ 4 $,
$ 13 ^ 4 $,
$ 14 ^ 3 $,
$ 15 ^ 3 $,
$ 16 ^ 2 $,
$ 17 ^ 2 $,
$ 18 ^ 1 $\\
\bottomrule
\end{tabular}
\medskip
\caption{\label{tab:mn_TPIs}
Partitions $m^n$ for which $ST(m^n)$ is a TPI in small dimensions. 
Here partitions made up of equal-sized parts are written as
$m^n = (m, \dots, m)$ \, ($n$ times).
Note that if $\lambda = m^n$ is a TPI, then so are $(m+j)^n$ and $m^{(n+j)}$ for $j\geq 1$ and thus they are not listed.
}
\end{table}

\section{Minimal Identities} 
Given a TPI $g(x_1,\ldots,x_k)$  one can {\em deduce} from $g$  other TPI's  by the following procedures:

1) Substitute to the variables $x_i$ some polynomial in $F\langle X\rangle$ (in particular permuting the variables).  

2)  Multiply to right and left by tensor polynomials. 

3) Permute the tensor factors.  

4) Take linear combination  of previously found TPI's.

\smallskip

The problem of understanding minimal TPI's is quite open. 
We approach this problem by focusing on special cases.

\begin{definition}\label{mint}
A TPI is {\em minimal} if it cannot be deduced from TPI's of strictly lower degree.

We will say that a TPI $ST(\lambda)$ is {\em $\lambda$-minimal} if for any partition $\lambda'$ obtained from $\lambda$ by removing one box, $ST(\lambda')$ is not a TPI.
\end{definition}If on the other hand  $ST(\lambda')$ is  a TPI then $ST(\lambda)$ can be deduced from $ST(\lambda')$ as follows.
Assume that the box removed is on the $i^{th}$ row and $\lambda\vdash k,\ ht(\lambda)=n$, then multiply  $ST(\lambda')$ (which is now in the variables $x_1,\ldots,x_{k-1}$) by the {\em tensor variable } $1^{\otimes i-1}\otimes x_k\otimes 1^{\otimes n-i }$ and then alternate the result as
$$ST(\lambda)=\frac 1{(k-1)!}Alt_{x_1,\ldots,x_ k  }ST(\lambda')1^{\otimes i-1}\otimes x_k\otimes 1^{\otimes n-i }. $$

For two partitions $\lambda\vdash h,\ \mu\vdash k$ write $\lambda \subset \mu$ if $\lambda_i \leq \mu_i$ for all parts $i$, i.e. the Young diagram of $\lambda$ fits into that of $\mu$. The above discussion establishes that:

\begin{proposition}\label{lem:TPIinclusion}\ \\
 1. If $ST(\lambda)$ is a TPI and $\lambda \subset \mu$ then $ST(\mu)$ is a TPI. \\
 2. If $ST(\lambda)$ is {\em not} a TPI and $\mu\subset \lambda$ then $ST(\mu)$ is {\em not} a TPI.
\end{proposition}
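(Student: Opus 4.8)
The plan is to exploit the fact that statements 1 and 2 are contrapositives of one another, so that only one of them needs to be proved, and then to reduce that one to the case of a single added box. First I would observe that negating statement 1 gives ``if $ST(\mu)$ is not a TPI and $\lambda\subset\mu$, then $ST(\lambda)$ is not a TPI'', which after renaming the larger partition $\mu$ as $\lambda$ and the smaller one $\lambda$ as $\mu$ is word-for-word statement 2. Hence it suffices to establish statement 1. To reduce to a single box, I would use that $\lambda\subset\mu$ means the diagram of $\lambda$ sits inside that of $\mu$, so the interval $[\lambda,\mu]$ in Young's lattice is graded by the number of boxes and admits a saturated chain
$$\lambda=\lambda^{(0)}\subset\lambda^{(1)}\subset\cdots\subset\lambda^{(r)}=\mu,\qquad r=|\mu|-|\lambda|,$$
in which each $\lambda^{(j+1)}$ is obtained from $\lambda^{(j)}$ by adjoining one box at an addable corner. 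By induction on $r$ it is then enough to treat the single-box step.

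For the single-box step I would invoke the deduction formula stated just above the proposition. If $\nu$ is obtained from $\nu'$ by adding a box in row $i$, with $\nu\vdash k$ and $ht(\nu)=n$, then
$$ST(\nu)=\frac{1}{(k-1)!}\,Alt_{x_1,\ldots,x_k}\;ST(\nu')\,\big(1^{\otimes i-1}\otimes x_k\otimes 1^{\otimes n-i}\big).$$
This exhibits $ST(\nu)$ as a deduction of $ST(\nu')$ in the sense of the operations listed at the start of this section: right multiplication by the tensor variable $1^{\otimes i-1}\otimes x_k\otimes 1^{\otimes n-i}$ is operation~2, while $Alt_{x_1,\ldots,x_k}$ is a signed sum over permutations of the variables, i.e. a linear combination (operation~4) of substitutions (operation~1). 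Each of these operations preserves the TPI property, since evaluating a left/right product, a variable permutation, or a linear combination of a tensor polynomial that vanishes under all evaluations again yields zero. Consequently $ST(\nu)$ is a TPI whenever $ST(\nu')$ is, and feeding this along the chain above proves statement~1.

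I expect the main obstacle to be nothing in the poset/induction bookkeeping but rather the correctness of the deduction formula itself: one must check that multiplying $ST(\nu')$ by the tensor variable in the \emph{correct} slot $i$ and then alternating over all $k$ variables reproduces exactly $ST(\nu)$, up to the normalizing factor $1/(k-1)!$, with the new variable $x_k$ distributed among the tensor factors in the way dictated by adding a box to row $i$. This is precisely the combinatorial content of the discussion preceding the proposition; once it is in hand, the two monotonicity statements follow formally from the saturated chain in Young's lattice together with the stability of TPIs under the four deduction operations.
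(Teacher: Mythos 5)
Your proposal is correct and takes essentially the same approach as the paper: the paper's entire justification is the single-box deduction formula displayed just before the proposition (``the above discussion establishes that''), with the contrapositive equivalence of statements 1 and 2 and the induction along a saturated chain in Young's lattice left implicit. Your write-up simply makes those routine reductions, and the TPI-stability of the deduction operations, explicit.
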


By Proposition~\ref{ad}, given two integers $d,m\leq 2d$, the tensor polynomial
 $ST(m^{[d;m]+1})$   is a TPI  for $d\times d$ matrices (cf. Table~\ref{tab:mn_TPIs}).
One may ask whether a given TPI $ST(m^{[d;m]+1})$ is also $\lambda$-minimal, that is, 
whether $ST(m^{[d;m]} \oplus m-1)$ is not also a TPI for $d\times d$ matrices.
 \begin{theorem}\label{bett}
 $ST(m^{[d;m]} \oplus (m-1))$ is {\em not} a TPI  for $d\times d$ matrices if  and only if $m$ is even or if $m$ is odd and   $m\leq d $.
\end{theorem}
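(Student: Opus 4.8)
The plan is to invoke the main Theorem~\ref{main1} and reduce everything to a packing question about $\delta_d$. Write $n:=[d;m]$, so that $\lambda=m^n\oplus(m-1)$ has $k:=mn+(m-1)$ boxes. By Theorem~\ref{main1}, $ST(\lambda)$ is \emph{not} a TPI for $d\times d$ matrices precisely when $k\le d^2$ and $\lambda\oplus 1^{d^2-k}$ is a refinement of $\delta_d=(2d-1,2d-3,\ldots,3,1)$. Such a refinement amounts to distributing the parts of $\lambda\oplus 1^{d^2-k}$ --- namely $n$ copies of $m$, one copy of $m-1$, and $d^2-k$ copies of $1$ --- into $d$ bins, where bin $i$ must sum to exactly $2i-1$. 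Since $1$'s are freely available to pad any deficit, the whole question is whether the $n$ copies of $m$ together with the single $m-1$ fit into bins of capacities $2i-1$ without overflow.

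The decisive step is an exact-capacity argument. If bin $i$ receives $s_i$ copies of $m$, then $m s_i\le 2i-1$ forces $s_i\le[(2i-1)/m]$, and summing gives $\sum_i s_i\le\sum_{i=1}^d[(2i-1)/m]=[d;m]=n$. Because all $n$ copies of $m$ must be placed, equality is forced, so $s_i=[(2i-1)/m]$ in \emph{every} bin; the distribution of the $m$'s is rigid. After placing them, the residual free space in bin $i$ is $(2i-1)-m[(2i-1)/m]=(2i-1)\bmod m$. The copy of $m-1$ can then be accommodated if and only if some bin $j$ has residual space at least $m-1$, i.e. $(2j-1)\bmod m\ge m-1$, which (the remainder never exceeding $m-1$) means $(2j-1)\bmod m=m-1$, equivalently $m\mid 2j$. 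Conversely, given such a $j\le d$, placing $[(2i-1)/m]$ copies of $m$ in bin $i$, the $m-1$ in bin $j$, and padding with $1$'s exhibits the refinement, and the number of $1$'s used is automatically $d^2-k$.

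It remains to translate the condition ``$m\mid 2j$ for some $j\in\{1,\ldots,d\}$'' and to dispatch the side condition $k\le d^2$. If $m$ is even then $m\mid 2j\iff(m/2)\mid j$, and $j=m/2\le d$ works since $m\le 2d$; if $m$ is odd then $\gcd(m,2)=1$, so $m\mid 2j\iff m\mid j$, and the least admissible value $j=m$ lies in $\{1,\ldots,d\}$ exactly when $m\le d$. This is precisely the asserted dichotomy. For the size bound, the identity $m[d;m]=d^2-\sum_{i=1}^d\big((2i-1)\bmod m\big)$ yields $k=d^2-\sum_{i=1}^d\big((2i-1)\bmod m\big)+(m-1)$; whenever the refinement condition holds, one summand equals $m-1$, so $\sum_i\big((2i-1)\bmod m\big)\ge m-1$ and hence $k\le d^2$ holds automatically. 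Thus both hypotheses of Theorem~\ref{main1} are met exactly under the stated parity-and-size condition.

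I expect the main obstacle to be the rigidity (exact-capacity) step: one must argue that possessing \emph{exactly} $[d;m]$ copies of $m$ forces a unique per-bin distribution, so that the fate of the lone $m-1$ is dictated entirely by the residues $(2i-1)\bmod m$. Once this is secured, the parity analysis and the verification of $k\le d^2$ are routine bookkeeping.
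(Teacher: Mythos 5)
Your proof is correct and follows essentially the same route as the paper: both reduce via Theorem~\ref{main1} to the refinement (bin-packing) question for $\delta_d$, both use the rigidity forced by $\sum_i s_i = \sum_i [(2i-1)/m] = [d;m]$, and both conclude via the remainder condition $(2j-1)\bmod m = m-1$, i.e.\ $m \mid 2j$ for some $j \le d$. Your write-up is somewhat tighter than the paper's in two respects — you handle the even and odd cases uniformly through the divisibility criterion rather than by separate case analysis, and you explicitly verify the side condition $k \le d^2$ via $m[d;m] = d^2 - \sum_i \bigl((2i-1)\bmod m\bigr)$, which the paper leaves implicit — but these are refinements of presentation, not a different argument.
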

\begin{proof}

 From Proposition~\ref{ad} we know that  $ST(m^n)$ is a TPI if and only if $n>[d;m]$.
 Therefore $ST(m^{[d;m]}) $  is {\em not} a TPI and $m^{[d;m]} \oplus 1^{d^2 - [d;m]}$ is a refinement of $\delta_d$.
  
 Consider first the case of $m=2l$  even for some positive integer $l\geq 1$.
 We have that $[2i-1/2l]=0$  for all  $i\leq l$ and for $i=l$ one has $2l-1=m-1$.
 Looking at the proof of  Proposition~\ref{ad}  we see that the refinement $m^{[d;m]}$ is obtained by writing all terms 
 $2i-1= m  r_i+1\cdot (2i-1-mr_i)  $ as a sum of  $r_i=[2i-1/m]$  terms equal to $m$ and $ 2i-1-mr_i   $  terms equal to $1$. 
 This decomposition involves the terms $m$  only  if $r_i>0$, that is when $2i-1>m-1\iff i>l$. Then  for the partition  $m^{[d;m]} \oplus (m-1) \oplus 1^{d^2-m {[d;m]}-(m-1)}$  we can still write  $2i-1= m  r_i+(2i-1-mr_i)$ for all $i>l$  while we write $2l-1=m-1$. The other terms are just sums of 1 and we have that this partition is also a refinement of  $\delta_d$.
 
 \smallskip
 
 Assume now $m=2l-1$  odd with $l\geq 2$. We have that $[2i-1/m]=0$  for all  $i< l$ and for $i=l$  we have $2l-1=m $. 
 Again we know that  $ST(m^{[d;m]})$  is not a TPI  since $m^{[d;m]}\oplus 1^{d^2-m  [d;m] }$ is a refinement of  $\delta_d$. We can and also have to use all the $[d;m]$ copies of $m$  to refine the part  of $\delta_d$ formed by the elements $2i-1\geq m$. This forces to write  $2i-1= m  r_i+(2i-1-r_i),\ i\geq l  $ since by definition $[d;m]=\sum_i r_i$. 
 
 Considering $m^{[d;m]} \oplus (m-1)$, 
 the part of size $m-1$ can be added to the refinement if and only if $m-1\leq (2i-1-mr_i)$ for some $i\geq l$. 
 Now $(2i-1-mr_i)$ is the remainder of the euclidean division of  $2i-1$  by $m$.  
 In particular, the remainder is $=m-1$ in the case of $i=m$ when $d\geq m$. Then choose $r_m=1$ and one has $(2i-1-mr_m)=m-1$ 
 and we may write  $2m-1$  as $m+m-1$.

 However, in the case of $m$ odd with $m>d$ then $m-1$ cannot be added to the largest $2i-1$ which is $2d-1$. 
 Then $m^{[d;m]}\oplus 1^{d^2-m  [d;m] }$ is not a refinement of  $\delta_d$ and $ST(m^{[d;m]}\oplus 1^{d^2-m  [d;m] })$ is a TPI.
 This ends the proof.
\end{proof}
\begin{lemma}\label{mmin}
If $n\leq [\frac d2]+1$ then $[d;2(d-n+1)]=n-1$.
\end{lemma}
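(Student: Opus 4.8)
The plan is to unfold the definition $[d;m]=\sum_{i=1}^{d}[(2i-1)/m]$ and to show that, under the hypothesis, this sum collapses into a simple count of the indices $i$ for which the floor equals $1$. Writing $m=2(d-n+1)=2d-2n+2$, I would first argue that every term $[(2i-1)/m]$ lies in $\{0,1\}$, and then count the nonzero ones.

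First I would establish the upper bound on each summand. The largest odd integer appearing is $2d-1$, so it suffices to check that $2d-1<2m$. Since $2m=4d-4n+4$, this inequality is equivalent to $n<d/2+5/4$, which follows from the hypothesis $n\leq[d/2]+1\leq d/2+1$. Hence $[(2i-1)/m]\in\{0,1\}$ for every $i=1,\ldots,d$, and therefore $[d;m]$ simply counts the indices $i$ with $[(2i-1)/m]\geq 1$.

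Next I would count those indices. The condition $[(2i-1)/m]\geq 1$ is equivalent to $2i-1\geq m=2d-2n+2$; since the left-hand side is odd and the right-hand side even, this sharpens to $i\geq d-n+2$. Because the hypothesis gives $n\leq d+1$, the threshold $d-n+2$ lies in $[1,d]$, so the number of integers $i$ with $d-n+2\leq i\leq d$ is exactly $d-(d-n+2)+1=n-1$. This yields $[d;m]=n-1$, as claimed. (One checks the edge case $n=1$ directly: then $m=2d$, every $(2i-1)/(2d)<1$, and the count is $0=n-1$.)

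The computation is entirely elementary; the only point requiring care is the parity step, where comparing the odd quantity $2i-1$ against the even $m$ converts the bound $2i-1\geq m$ into the clean integer threshold $i\geq d-n+2$. I expect no genuine obstacle beyond this bookkeeping — the role of the hypothesis $n\leq[d/2]+1$ is precisely to guarantee that no floor term ever reaches $2$, which is what makes the sum a pure count.
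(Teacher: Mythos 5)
Your proof is correct and follows essentially the same route as the paper's: both arguments show that under the hypothesis $n\leq[\frac d2]+1$ every term $[(2i-1)/2(d-n+1)]$ is $0$ or $1$ (via the inequality $2d-1<4(d-n+1)$) and then count exactly $n-1$ indices, namely $i\geq d-n+2$, contributing a $1$. Your parity sharpening and explicit $n=1$ check are careful touches, but the substance is identical to the paper's computation.
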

\begin{proof}
Let us compute $[\frac{2i-1}{2(d-n+1)}]$.  If $2(d-n+1)>2i-1$ that is $i\leq d-n+1$  then $[\frac{2i-1}{2(d-n+1)}]=0$. 

We have  $4(d-n+1)>2d-1$ is equivalent to   $2(d-2n)+5>0\iff  d-2n \geq -2\iff n-1\leq [d/2]$ therefore  for $d\geq i> d-n+1$ we have $[\frac{2i-1}{2(d-n+1)}]=1.$ Since there are $n-1$ terms if this type the claim  follows.
\end{proof}
Define $\lambda_{d,n} := (2(d-n +1))^n$.
\begin{corollary}
 A TPI $ST(\lambda)$ for $d\times d$ matrices in $n$ tensor factors with $n \leq [d/2]+1$ is $\lambda$-minimal
 if and only if 
 $\lambda = \lambda_{d,n}$ (then it is in in $k=n[2(d-n +1)]$ variables).
\end{corollary}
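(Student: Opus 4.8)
The plan is to combine the refinement criterion of Theorem~\ref{main1} with the explicit facts already proved in Proposition~\ref{ad}, Theorem~\ref{bett} and Lemma~\ref{mmin}. Throughout I would write $m_* := 2(d-n+1)$, so that $\lambda_{d,n} = m_*^{\,n}$ with $m_*$ \emph{even}, and recall from Lemma~\ref{mmin} that $[d;m_*] = n-1$. For the implication $\lambda = \lambda_{d,n} \Rightarrow ST(\lambda)$ is $\lambda$-minimal I would argue directly: Proposition~\ref{ad} gives that $ST(m_*^{\,n})$ is a TPI (since $n > n-1 = [d;m_*]$), and since $m_*^{\,n}$ is a rectangle its only removable box is the bottom-right corner, whose deletion produces $m_*^{\,n-1}\oplus(m_*-1) = m_*^{\,[d;m_*]}\oplus(m_*-1)$. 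As $m_*$ is even, Theorem~\ref{bett} shows this is \emph{not} a TPI; being the only one-box deletion, this makes $ST(\lambda_{d,n})$ $\lambda$-minimal.

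The substantial direction is uniqueness. First I would recast ``TPI'' as a packing statement: by Theorem~\ref{main1}, for $\lambda$ with $|\lambda|=k$, the polynomial $ST(\lambda)$ is a TPI iff $\lambda\oplus 1^{d^2-k}$ is not a refinement of $\delta_d$; and since the $1$'s are free to fill slack and the capacity-$1$ block absorbs one further $1$, such a refinement exists iff the parts $\geq 2$ of $\lambda$ can be distributed among ``bins'' of capacities $3,5,\ldots,2d-1$ without exceeding any capacity. From this two reductions are immediate. A $\lambda$-minimal TPI has all parts $\geq 2$, because deleting a box from a part equal to $1$ leaves $\lambda\oplus 1^{d^2-k}$ unchanged, hence preserves the (non-)refinement status and so TPI-ness, contradicting minimality.

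The crux is the lower bound $\lambda_n \geq m_*$ on the smallest part. Assume $\lambda_n \leq 2d-2n+1$. Minimality makes the deletion of the last box a non-TPI, hence packable, so in particular $\{\lambda_1,\dots,\lambda_{n-1}\}$ is packable. In any such packing the $n$ largest bins all have capacity $\geq 2d-2n+1\geq\lambda_n$, so to forbid inserting $\lambda_n$ each of these $n$ bins would already have to contain a part — impossible with only $n-1$ parts available. Thus $\lambda$ itself is packable, contradicting that $ST(\lambda)$ is a TPI; therefore $\lambda_n \geq 2d-2n+2 = m_*$. I expect this pigeonhole to be the main obstacle, both to find and to state cleanly.

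Once $\lambda_n\geq m_*$, all parts are $\geq m_*$, and because $n\leq [d/2]+1$ forces $2m_*>2d-1$, no two parts fit in a single bin; a packing is then a matching of parts to distinct bins, which by the standard greedy/Hall argument exists iff $\lambda_i\leq 2d-2i+1$ for every $i$. Deleting the last box now forces $\lambda_n=m_*$ (otherwise position $n$ still violates the bound, so the deletion stays a TPI) together with $\lambda_i\leq 2d-2i+1$ for $i<n$; and were some part to exceed $m_*$, deleting the box at the lowest such corner would leave the smallest part equal to $m_*=(2d-2n+1)+1$, again violating the bound at position $n$ and hence still a TPI, contradicting minimality. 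This yields $\lambda=m_*^{\,n}=\lambda_{d,n}$. Finally I would dispose of the few small cases $d\leq 2$ (where the bin count in the crux degenerates because $n=d$ is permitted) by direct inspection.
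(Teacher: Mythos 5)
Your proof is correct, and its first half coincides with the paper's: both establish that $ST(\lambda_{d,n})$ is a $\lambda$-minimal TPI from Proposition~\ref{ad}, Lemma~\ref{mmin}, and Theorem~\ref{bett} applied to the unique removable corner of the rectangle. For the uniqueness direction you take a genuinely different route. The paper argues by containment: since $ST(\lambda_{d,i})$ is a TPI for every $i\le n$ (Proposition~\ref{ad} with Lemma~\ref{mmin} again), a $\lambda$-minimal TPI cannot strictly contain any rectangle $(2(d-i+1))^i$ --- otherwise one deletes a corner box of $\lambda$ lying outside that rectangle and Proposition~\ref{lem:TPIinclusion} keeps the result a TPI --- so $\lambda_i\le 2d-2i+1$ for all $i\le n$, whence $\lambda\subseteq(2d-1,2d-3,\dots,2d-2n+1)$, which completes to a refinement of $\delta_d$ and contradicts $ST(\lambda)$ being a TPI. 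You instead recast Theorem~\ref{main1} as bin packing and work from scratch: a pigeonhole argument ($n-1$ parts cannot occupy all $n$ largest bins, each of capacity $\ge\lambda_n$) forces $\lambda_n\ge 2(d-n+1)$, and since $n\le[d/2]+1$ makes every part too large for two to share a bin, a greedy/Hall matching criterion then pins down $\lambda=\lambda_{d,n}$. The paper's argument is shorter because it reuses Proposition~\ref{lem:TPIinclusion}; yours is more self-contained and yields a sharper intermediate statement (for partitions all of whose parts exceed $(2d-1)/2$, $ST(\lambda)$ is a TPI iff $\lambda_i>2d-2i+1$ for some $i$). One caveat: your preliminary claim that a $\lambda$-minimal TPI has no part equal to $1$ is justified by saying that deleting such a box leaves $\lambda\oplus 1^{d^2-k}$ unchanged, which tacitly assumes $k\le d^2$; when $k=d^2+1$ the clause ``$k\le d^2$'' in Theorem~\ref{main1} can flip TPI-ness (for $d=2$ the partition $(3,1,1)$ is a $\lambda$-minimal TPI --- outside your hypothesis, but a counterexample to the claim as stated). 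This does not damage your proof, since your pigeonhole step never actually uses that reduction and works verbatim when $\lambda_n=1$, but the claim should be restricted to $k\le d^2$ or dropped.
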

\begin{proof}
  The fact that  $ST(\lambda_{d,n} )$  is a $  \lambda$-minimal  TPI 
 follows from Theorem \ref{bett} and the previous Lemma.

Now it remains to show the {\em only if}: why is a  $\lambda$-minimal TPI in $n \leq [d/2]+1$ non trivial tensor factors that particular rectangular partition?

  If $ST(\lambda)$ with $\lambda\neq  \lambda_{d,n} $ is a $\lambda$-minimal TPI then $\lambda$   cannot contain the rectangle $ (2(d-i +1))^i,\ \forall  i\leq n$, since  $ST(\lambda_{d,n} )$  is $\lambda$-minimal and $ST(\lambda_{d,i<n})$ are not TPI's. 
  So we must have $\lambda_i\leq 2(d-i +1)-1=2d-(2i-1),\ 1\leq i\leq n$. 

Thus $\lambda$  is contained in 
$$2 d-1, 2 d-3,\ldots, 2d-(2n-1)  $$ but this partition  can be completed to  $\delta_d$ so it is not a partition of a TPI.

\end{proof}

\section{Tensor polynomials and the Weingarten function}
\label{sect:inWein}

We now explain how to evaluate arbitrary alternating tensor polynomials in $d^2$ variables. 
Then $ST(\lambda)$ with $\lambda \vdash n$ is in $\Sigma_n(F^d)$, the algebra spanned by the permutations in $\End((F^d)^{\otimes n})$.
However by tolerating a slight abuse of notation we can regard $ST(\lambda)$ also as an element of the group algebra~$\C[S_d]$.
 
For later use, we recall that the central units  of  $ \C[S_d] $ (in fact already in  $ \Q[S_d] $) are the elements
$$
 \omega_\mu = \frac{\chi_\mu(e)}{d!} \sum_{\sigma \in S_d} \chi_\mu(\sigma^{-1}) \sigma\,, \quad \mu \vdash d
$$
form a basis for the center $Z(\C[S_d])$.

We also recall the Schur-Weyl decomposition 
$$
 (\C^{d})^{\otimes n} \cong \bigoplus_{\mu \vdash n} U_\mu \ot S_\mu\,.
$$
with $U_\mu$ (resp. $S_\mu$) irreducible representations of $S_d$ (resp. $GL(d,\C)$).
Then 
$$
 \tr(\omega_\mu) = \dim(U_\mu) \dim(S_\mu) = \chi_\mu(e) s_{\mu,d}(1) \,,
$$
where $s_{\mu, d}(x)$ is the Schur polynomial $s_\mu(x, \dots, x)$ in $d$ variables.

\bigskip
Let us now treat the case ${\delta_d} = (2d-1, \dots, 3, 1)$.

\begin{proposition}
 Let $\delta_d= (2d-1,2d-3, \dots, 3, 1)$ and $G_d=ST(\delta_d)(x_1, \dots, x_{d^2})$ with $x_1, \dots, x_{d^2} \in M_d(F)$. Then 
\begin{align}\label{eq:special_case}
  G_d &= \det(x_1, \dots, x_{d^2})J_d, \\
  J_d &=\mathcal{C}_d   \frac{1}{d!} \sum_{\substack{\mu \vdash d\\l(\mu) \leq d}} 
                                    \frac{\chi_\mu(e) }{s_{\mu,d}(1)} \,\omega_\mu \, ,\quad \mathcal C_d:=\pm \frac{1!3!5!\cdots (2d-1)!}{1!2!\cdots (d-1)!}.\nn
 \end{align} 
\end{proposition}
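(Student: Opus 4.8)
The plan is to determine the invariant $J_d$ by computing all its traces against permutations and then to recognize the result as the stated central element. By Proposition~\ref{lelej} we already have the factorization $G_d=\det(x_1,\dots,x_{d^2})\,J_d$ with $J_d\in\Sigma_d(F^d)$ an invariant, which we regard as an element of $\C[S_d]$. The semisimplicity of $\Sigma_d(F^d)$ recorded in the Remark following Eq.~\eqref{jjg} says that the trace pairing is nondegenerate, so $J_d$ is the \emph{unique} element of $\Sigma_d(F^d)$ with its collection of traces $\tr(\sigma J_d)$, $\sigma\in S_d$. It therefore suffices to compute these traces and to produce a central element carrying the same ones.

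For the first task, dividing $\tr(\sigma G_d)=\det(x_1,\dots,x_{d^2})\,\tr(\sigma J_d)$ by the (not identically zero) determinant reduces everything to Lemma~\ref{lem:tr_sG}, which gives $\tr(\sigma G_d)=\pm\,T_{\mu_1}\wedge\cdots\wedge T_{\mu_l}$ for $\sigma(\delta_d)=(\mu_1,\dots,\mu_l)$, vanishing unless the $\mu_i$ are distinct and odd. The Corollary after Lemma~\ref{lem:tr_sG} further kills any term with $\max_i\mu_i\ge 2d$, so a nonzero wedge forces every $\mu_i$ to be a distinct odd integer in $\{1,3,\dots,2d-1\}$. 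Since these values sum to exactly $d^2=1+3+\cdots+(2d-1)$, a distinct selection of them summing to $d^2$ must exhaust the whole set; this needs $l=d$ parts, i.e. $\sigma$ has $d$ cycles, i.e. $\sigma=e$. For $\sigma=e$ we get $\tr(G_d)=\pm\,T_1\wedge T_3\wedge\cdots\wedge T_{2d-1}=\pm\,\mathcal T_d=\pm\,\mathcal C_d\det(x_1,\dots,x_{d^2})$ by Eq.~\eqref{costdis}; absorbing the global reordering sign into the sign of $\mathcal C_d$ yields $\tr(\sigma J_d)=\mathcal C_d\,\delta_{\sigma,e}$.

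For the second task I set $\tilde J:=\frac{\mathcal C_d}{d!}\sum_{\mu\vdash d,\,l(\mu)\le d}\frac{\chi_\mu(e)}{s_{\mu,d}(1)}\,\omega_\mu$ and compute its traces via Schur--Weyl: $\omega_\mu$ is the projector onto the isotypic block $U_\mu\ot S_\mu$, on which $\sigma$ acts as $\rho_\mu(\sigma)\ot\mathrm{id}_{S_\mu}$, so that $\tr(\sigma\,\omega_\mu)=\chi_\mu(\sigma)\,s_{\mu,d}(1)$. Hence
\begin{equation}
 \tr(\sigma\tilde J)=\frac{\mathcal C_d}{d!}\sum_{\mu\vdash d}\chi_\mu(e)\,\chi_\mu(\sigma)=\mathcal C_d\,\delta_{\sigma,e}\,,\nonumber
\end{equation}
where I used column orthogonality of the $S_d$-characters together with the fact that every $\mu\vdash d$ automatically satisfies $l(\mu)\le d$, so no block is lost in the defining representation. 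Since $J_d$ and $\tilde J$ have equal traces against all $\sigma\in S_d$, nondegeneracy forces $J_d=\tilde J$, which is precisely the claimed formula.

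I expect the main obstacle to be the vanishing step of the second paragraph: the passage from $\tr(\sigma G_d)\ne 0$ to $\sigma=e$ rests on the combinatorial observation that $\{1,3,\dots,2d-1\}$ is the \emph{unique} set of admissible (odd, $\le 2d-1$) summands of $d^2$, combined with the suppression of even parts and over-long parts supplied by Lemma~\ref{lem:tr_sG} and its Corollary. Once this is secured, the identification of the central coefficients is a routine application of Schur--Weyl duality and character orthogonality.
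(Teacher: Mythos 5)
Your proof is correct, and its first half coincides with the paper's: both arguments rest on the fact (Remark following Eq.~\eqref{jjg}) that an element of $\Sigma_d(F^d)$ is determined by its traces against all permutations, and both reduce the problem to showing $\tr(\sigma J_d)=\mathcal C_d\,\delta_{\sigma,e}$. The paper gets this trace computation by citing Proposition~\ref{prop:refinement}; you unwind it through Lemma~\ref{lem:tr_sG}, its Corollary, and the observation that the only way to write $d^2$ as a sum of distinct odd integers $\leq 2d-1$ is $1+3+\cdots+(2d-1)$, which is exactly the content of that proposition made explicit, and is if anything more careful. Where you genuinely diverge is the identification step. The paper first proves that $J_d$ is central via the averaging argument $\tilde G_d=(d!)^{-1}\sum_\pi \pi G_d\pi^{-1}$, and then reads off the coefficients of $J_d$ in the basis $\{\omega_\mu\}$ of the center from $\tr(\omega_\mu J_d)=\chi_\mu(e)^2\mathcal C_d/d!$ together with $\tr(\omega_\mu)=\chi_\mu(e)s_{\mu,d}(1)$; this \emph{derives} the formula without knowing it in advance. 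You instead take the stated expression as a candidate $\tilde J$, compute $\tr(\sigma\omega_\mu)=\chi_\mu(\sigma)s_{\mu,d}(1)$ by Schur--Weyl, apply column orthogonality of the $S_d$-characters to get $\tr(\sigma\tilde J)=\mathcal C_d\,\delta_{\sigma,e}$, and conclude by nondegeneracy of the trace pairing. Your route is shorter and bypasses the centrality argument entirely (centrality of $J_d$ falls out as a consequence rather than being an input), at the cost of being a verification that presupposes the answer, whereas the paper's computation is constructive. Your side remark that every $\mu\vdash d$ automatically has $l(\mu)\leq d$, so the constraint in the sum is vacuous and no isotypic block is lost, is correct and is indeed needed for the column-orthogonality step to close.
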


\begin{proof}
The operator $G_d$ is completely determined by the set of inner products $\{ \tr(\sigma G_d) \,|\, \sigma \in S_d\}$. 
Proposition~\ref{prop:refinement} implies that
\begin{equation}\label{eq:GexpectSigma}
 \tr(\sigma G_d) = \begin{cases}
                         \mathcal{C}_d \det(x_1, \dots, x_{d^2}) & \text{if } \sigma = e\\
                         0    & \text{otherwise}
                         \end{cases}\iff  \tr(\sigma J_d) = \begin{cases}
                         \mathcal{C}_d   & \text{if } \sigma = e\\
                         0    & \text{otherwise}
                         \end{cases}
\end{equation}

Now observe that $ \tilde{G}_d := (d!)^{-1} \sum_{\pi \in S_d} \pi G_d \pi^{-1}$
has the same set of inner products with permutations $\sigma$ as $G_d$ does. 
Therefore $\tilde{G}_d = G_d$ and the operator  $J_d$ lies in the center $Z(\C[S_d])$.
Equation~\eqref{eq:GexpectSigma} yields
\begin{align}
  \tr(\omega_\mu J_d)  &= \tr \big(\frac{\chi_\mu(e)}{d!} 
                        \sum_{\sigma \in S_d} \chi_\mu(\sigma^{-1}) \sigma J_d) \big)
                     = \frac{\chi_\mu(e)^2}{d!} \mathcal{C}_d  \,.\nn
\end{align}
Then
\begin{align}
    J_d &= \sum_{\substack{\mu \vdash d \\ l(\mu)\leq d}} \frac{\tr(\omega_\mu J_d)}{\tr(\omega_\mu)}   \,\omega_\mu 
              = \mathcal{C}_d \frac{1}{d!} \sum_{\substack{\mu \vdash d \\ l(\mu)\leq d}}  
                    \frac{\chi_\mu(e) }{s_{\mu,d}(1)} \,\omega_\mu\,.\nn
\end{align} 
This ends the proof.
\end{proof}

\begin{table}[tbp]
\caption{\label{fig:tables}
Expansion of $J_d:=J_G$ for $G_d=ST((2d-1, 2d-3, \dots, 3,1))$
for $d\leq 6$.
Here $\omega_\mu$ are the minimal central idempotents and $c_\mu = \sum_{\text{cycle type}(\pi) = \mu} \pi $ is the sum over all permutations of cycle type $\mu$.
The overall sign of $J_d $ depends on that of $\mathcal{C}_d$ which is presently unknown.
In the second table we write the cycle types in non-increasing order. Then we note that the absolute values $|c_\mu|$
are increasing in lexicographic order.}
\begin{tabular}{@{}ll@{}}
$d$ \quad\quad & $J_d$ expanded in $\omega_{\mu}$ \\
\toprule
2 & $  
  \omega_{ [2] } +
3 \omega_{ [1, 1] } 
$ \\
3 &$ 3 \big(
2 \omega_{ [3] } +
5 \omega_{ [2, 1] } +
20 \omega_{ [1, 1, 1] } 
\big)$ \\
4 &$ 60 \big(
6 \omega_{ [4] } +
14 \omega_{ [3, 1] } +
21 \omega_{ [2, 2] } +
42 \omega_{ [2, 1, 1] } +
210 \omega_{ [1, 1, 1, 1] } 
\big)$ \\
5 &$ 15120 \big(
20 \omega_{ [5] } +
45 \omega_{ [4, 1] } +
72 \omega_{ [3, 2] } +
120 \omega_{ [3, 1, 1] } +
168 \omega_{ [2, 2, 1] } +
420 \omega_{ [2, 1, 1, 1] } $ \\& $+
2520 \omega_{ [1, 1, 1, 1, 1] } 
\big)$ \\
6 & $ 10886400 \big(
420 \omega_{ [6] } +
924 \omega_{ [5, 1] } +
1540 \omega_{ [4, 2] } +
2310 \omega_{ [4, 1, 1] } +
1980 \omega_{ [3, 3] } $ \\& $+
3465 \omega_{ [3, 2, 1] }  +
6930 \omega_{ [3, 1, 1, 1] } +
5544 \omega_{ [2, 2, 2] } +
9240 \omega_{ [2, 2, 1, 1] } $ \\& $+
27720 \omega_{ [2, 1, 1, 1, 1] }  +
194040 \omega_{ [1, 1, 1, 1, 1, 1] } 
\big)$
\\
\bottomrule \\
$d$ \quad\quad & $J_d$ expanded in $c_\mu$ \\
\toprule
2 & $
- c_{ [2] }
+ 2 c_{ [1, 1] }
$
\\
3 & $ 3 \big(
  2 c_{ [3] }
-3 c_{ [1, 2] }
+ 7 c_{ [1, 1, 1] }
\big)$
\\
4 & $ 15 \big(
-20 c_{ [4] }
+ 22 c_{ [2, 2] }
+ 29 c_{ [1, 3] }
-48 c_{ [1, 1, 2] }
+ 134 c_{ [1, 1, 1, 1] }
\big)$
\\
5 & $ 2520 \big(
  70 c_{ [5] }
-74 c_{ [2, 3] }
-101 c_{ [1, 4] }
+ 115 c_{ [1, 2, 2] }
+ 160 c_{ [1, 1, 3] }
-299 c_{ [1, 1, 1, 2] } $ \\& $
+ 1015 c_{ [1, 1, 1, 1, 1] }
\big)$
\\
6 & $ 1814400 \big(
-882 c_{ [6] }
+ 900 c_{ [3, 3] } 
+ 922 c_{ [2, 4] }
-1014 c_{ [2, 2, 2] }
+ 1274 c_{ [1, 5] } $ \\ & $
-1377 c_{ [1, 2, 3] } 
-2004 c_{ [1, 1, 4] } 
+ 2396 c_{ [1, 1, 2, 2] } 
+ 3540 c_{ [1, 1, 1, 3] } 
-7614 c_{ [1, 1, 1, 1, 2] } $ \\ & $
+ 31524 c_{ [1, 1, 1, 1, 1, 1] }
\big)$
\\
\bottomrule
\end{tabular}
\end{table}

Up to the sign of $\mathcal{C}_d$, the values of $G_d = ST(\delta_d)$ for small $d$ are listed in Table~\ref{fig:tables}. 
Here, $\omega_\mu$ are the minimal central idempotents and 
$c_\mu = \sum_{\text{cycle type}(\pi) = \mu} \pi $ is the sum over all permutations of cycle type $\mu$.

We remark on an interesting property: consider the cycle types written in a non-decreasing fashing. 
When expanded in terms of $c_\mu$ the absolute values $|c_\mu|$ 
are increasing in lexicographic order (cf. \cite{procesi2020note}).

\bigskip We now introduce the Weingarten function.
Let $A \in End((\C^d)^{\ot n})$ and define the maps
\begin{align}
 \mathds{E}(A)  &= \int_{\mathcal{U}(d)} U^{\otimes n} A \,(U^{-1})^{\otimes n} dU\,, dU\quad\text{Haar measure} \nn\\ 
 \Phi(A)        &= \sum_{\sigma \in S_n} \tr(\sigma^{-1} A) \sigma\,. \nn
\end{align}
$\mathds{E}(A)$ is also known as a $n$-fold twirl or a twirling channel.
It is known that~\cite{CollinsSniady2006}
\begin{align}\label{mma}
 \mathds{E}(\one) &= \one \,,\\
 \Phi(\one)       &= n! \sum_{\substack{\mu \vdash n \\ l(\mu) \leq d}} \frac{s_{\mu, d}(1)}{\chi_\mu(e)} \omega_\mu\,,\nn
\end{align} 
The relation between $\mathds{E}$ and $\Phi$ is the following~\cite[Prop. 2.3]{CollinsSniady2006}:
\begin{equation}\label{eq:Phi_to_E}
 \Phi(A) = \mathds{E}(A) \Phi(\one)\,.
\end{equation}
Since the elements  $ \omega_\mu$ are central units, adding to $\one$ in  the operator algebra $  End((\C^d)^{\ot n})$, the operator  $\Phi(\one)$ of  Formula \eqref{mma}, is invertible. The  inverse of $\Phi(\one)$  also known as the Weingarten operator, is
\begin{align}\label{eq:weinOp}
 \Wg(d,n) = \frac{1}{(n!)^2} \sum_{\substack{\mu \vdash n \\ l(\mu) \leq d}} \frac{\chi_\mu(e)^2}{s_{\mu,d}(1)} \sum_{\pi \in S_k} \chi_\mu(\pi^{-1})\pi = \frac{1}{n!} \sum_{\substack{\mu \vdash n \\ l(\mu) \leq d}} \frac{\chi_\mu(e)}{s_{\mu,d}(1)} \omega_\mu\,.
\end{align} 
Observe that if $d\geq n$ then   $\Phi(\one)       = n! \sum_{  \mu \vdash n  } \frac{s_{\mu, d}(1)}{\chi_\mu(e)} \omega_\mu $  is invertible as element of the algebra of the symmetric group, which in this case it is isomorphic to the algebra of operators which it induces on $  End((\C^d)^{\ot n})$.
And thus solving for $\mathds{E}(A)$ in Eq.~\eqref{eq:Phi_to_E} gives
$$
 \mathds{E}(A) = \Phi(A) \Wg(d,n)\,.
$$

Comparing Eq.~\eqref{eq:special_case} and Eq.~\eqref{eq:weinOp} we see that:
\begin{corollary}
Let $\delta_d = (1,3, \dots, 2d-1)$ and $x_1, \dots, x_{d^2} \in M_d(F)$. Then 
 \begin{equation}
  ST(\delta_d)(x_1, \dots, x_{d^2}) = \mathcal{T}_d (x_1, \dots, x_{d^2})\Wg(d,d)= \mathcal{C}_d \det(x_1, \dots, x_{d^2})\Wg(d,d)\,. \nn
 \end{equation} 
\end{corollary}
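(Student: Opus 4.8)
The plan is to obtain the statement by directly matching two explicit formulas that have already been derived, rather than by any fresh computation. First I would invoke the preceding Proposition, which gives
\begin{equation}
ST(\delta_d)(x_1,\dots,x_{d^2}) = G_d = \det(x_1,\dots,x_{d^2})\,J_d, \qquad
J_d = \mathcal{C}_d\,\frac{1}{d!} \sum_{\substack{\mu \vdash d \\ l(\mu)\leq d}} \frac{\chi_\mu(e)}{s_{\mu,d}(1)}\,\omega_\mu\,. \nonumber
\end{equation}
This already packages the full representation-theoretic content of the left-hand side, so nothing further is needed on the $ST$ side.

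Next I would specialize the Weingarten operator of Eq.~\eqref{eq:weinOp} to the balanced case $n=d$, which reads
\begin{equation}
\Wg(d,d) = \frac{1}{d!} \sum_{\substack{\mu \vdash d \\ l(\mu)\leq d}} \frac{\chi_\mu(e)}{s_{\mu,d}(1)}\,\omega_\mu\,. \nonumber
\end{equation}
Here I would point out that the length restriction $l(\mu)\leq d$ is vacuous, since every partition of $d$ has at most $d$ parts; hence both sums range over the identical index set $\{\mu \vdash d\}$ with the identical summands $\frac{\chi_\mu(e)}{s_{\mu,d}(1)}\omega_\mu$. Reading the two displays term by term then yields the clean identification $J_d = \mathcal{C}_d\,\Wg(d,d)$.

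Finally I would assemble the pieces. Substituting this identification into the Proposition gives $ST(\delta_d) = \mathcal{C}_d \det(x_1,\dots,x_{d^2})\,\Wg(d,d)$, and invoking Remark~\ref{Iin}, specifically the relation $\mathcal{T}_d(x_1,\dots,x_{d^2}) = \mathcal{C}_d \det(x_1,\dots,x_{d^2})$ from Eq.~\eqref{costdis}, collapses the scalar factor to give $ST(\delta_d) = \mathcal{T}_d(x_1,\dots,x_{d^2})\,\Wg(d,d)$, which is exactly the claimed chain of equalities.

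I do not expect any genuine analytic obstacle here: all of the substantive work was carried out in the preceding Proposition (the computation of $J_d$ via the central idempotents $\omega_\mu$) and in the derivation of the Weingarten operator~\eqref{eq:weinOp}. The only point that demands a moment's care is the bookkeeping of the summation ranges---verifying that the constraint $l(\mu)\leq d$ is automatically satisfied when $n=d$---so that the two expansions coincide exactly and not merely up to the discarded terms one would encounter in the unbalanced regime $n<d$.
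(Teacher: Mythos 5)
Your proposal is correct and is essentially identical to the paper's own argument: the paper derives the corollary precisely by comparing Eq.~\eqref{eq:special_case} with Eq.~\eqref{eq:weinOp} specialized to $n=d$, which gives $J_d = \mathcal{C}_d\,\Wg(d,d)$, and then using $\mathcal{T}_d = \mathcal{C}_d \det(x_1,\dots,x_{d^2})$ from Remark~\ref{Iin}. Your additional observation that the constraint $l(\mu)\leq d$ is vacuous for $\mu \vdash d$ is correct and only reinforces the term-by-term identification.
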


\bigskip

We can now expand general $ST(\lambda)$ in $d^2$ variables with $\Wg$.
If $A$ is a linear combination of permutations then $\mathds{E}(A) = A$ and we obtain the expansion

$$
 A = \sum_{\sigma \in S_n} \tr(\sigma^{-1} A) \sigma \Wg(d,n)\,.
$$
In particular, for $ST(\lambda) := ST(\lambda)(x_1, \dots, x_{d^2})$ we have
\begin{align}
 ST(\lambda) &=\sum_{\sigma \in S_n} \tr\big(\sigma^{-1} ST(\lambda)\big) \sigma \Wg(d,n) \nn \\
 &= \det(x_1,\ldots,x_{d^2})J_\lambda ,\quad J_\lambda \in \Sigma_n(F^d)\subset M_d(F)^{\otimes n} \,. \nn
\end{align}

Evaluating this expression (that is finding $J_\lambda$) is a matter of knowing the values $\tr(\sigma^{-1} ST(\lambda))$.
By Proposition~\ref{mmu} and Lemma~\ref{lem:tr_sG}, these are given by
\begin{align}\label{fof}
    \tr( \sigma^{-1} ST(\lambda) ) &=    \pm T_{\mu_1} \wedge \dots \wedge T_{\mu_l} \\
    &=
    \begin{cases}
     \pm \mathcal \mathcal{C}_d\det( x_1,\ldots, x_{d^2})   &\text{ if all $\mu_i$ are odd and distinct } \nonumber\\
     0 &\text{else}.
    \end{cases}
\end{align}
with $(\mu_1, \dots, \mu_l) = \sigma^{-1}(\lambda)$.

The sign of each term $\tr(\sigma^{-1} ST(\lambda))$ is determined by the order of the variables appearing in the monomials, 
that is by the sign of the permutation needed to obtain
$$
    \mathcal T_d = T_1\wedge T_3\wedge T_5\wedge \cdots \wedge T_{2d-1} = \mathcal{C}_d\det( x_1,\ldots, x_{d^2})\,,
$$
$$
T_1:= \tr(x_1) \,,
T_3:= \tr(x_2 x_3 x_4) \,, 
\dots, 
T_{2d-1}:= \tr(x_{d^2-2d+2} x_{d^2-2d+3} \cdots x_{d^2})\,.
$$
so that length of the monomials under the traces are $1, 3, 5, \dots, 2d-1$.

\bigskip

{\scriptsize
\begin{table}[tbp]
\caption{\label{fig:TP}
$\Phi(J_\lambda / \mathcal{C}_d) = \Phi(ST(\lambda)/\mathcal T_d)$ and $J_\lambda = ST(\lambda) / \det( x_1,\ldots, x_{d^2})$ for some $\lambda$.}
\begin{tabular}{@{}ll@{\quad}l@{}}
$d$ & $\lambda$ \quad\quad & $  \Phi(J_\lambda/\mathcal C_d) $\\
\toprule
$2$ &$[3, 1]$    & $()$ \\
    &$[2, 1, 1]$ &  $(1,2) - (1,3)$ \\
    &$[1,1,1,1]$ & $(2,3,4) - (2,4,3) - (1,2,3) + (1,2,4) + (1,3,2) - (1,3,4) - (1,4,2) + (1,4,3)$\\\addlinespace[0.3mm]
$3$ &$[5, 3, 1]$    & $()$\\
    &$[5, 2, 1, 1]$ &  $(2,3) - (2,4)$\\
    &$[4, 3, 1, 1]$ &  $-(1,3) + (1,4)$\\
    &$[3,3,2,1]$    & $-(2,3) + (1,3)$ \\
    &$[3,2,2,1,1]$  & $-(2,3,4) + (2,3,5) - (2,4,3) + (2,5,3) + (1,2)(3,4) - (1,2)(3,5) + (1,3)(2,4) $ \\ && $- (1,3)(2,5)$\\
\bottomrule
   \\
$d$ & $\lambda$ \quad\quad & $\pm J_\lambda$\\
\toprule
$2$ &$[3, 1]$    & $2() - (1,2)$ \\
    &$[2, 1, 1]$ & $(1,2) - (1,3)$ \\
    &$[1,1,1,1]$ & $\frac{1}{2}[(2,3,4) - (2,4,3) - (1,2,3) + (1,2,4) + (1,3,2) - (1,3,4) - (1,4,2) + (1,4,3)]$\\ \addlinespace[0.3mm]
$3$ &$[5, 3, 1]$    & $3[7() - 3(2,3) - 3(1,2) + 2(1,2,3) + 2(1,3,2) - 3(1,3)]$\\
    &$[5, 2, 1, 1]$ & $8(2,3) - 8(2,4) - 3(1,2,3) + (1,2,3,4) - (1,2,4,3) + 3(1,2,4) - 3(1,3,2) $\\
    &&$- (1,3,4,2) + (1,3) + 3(1,3)(2,4) + (1,4,3,2) + 3(1,4,2) - (1,4) - 3(1,4)(2,3)$\\
    &$[4, 3, 1, 1]$ & $-(2,3) + (2,4) + 3(1,2,3) + (1,2,3,4) - (1,2,4,3) - 3(1,2,4) + 3(1,3,2) $\\
    &&$- (1,3,4,2) - 8(1,3) + 3(1,3)(2,4) + (1,4,3,2) - 3(1,4,2) + 8(1,4) - 3(1,4)(2,3)$ \\
    &$[3,3,2,1]$    & $-8(2,3) + 3(2,3,4) + 3(2,4,3) - (2,4) - (1,2,3,4) + (1,2,4,3) + (1,3,4,2) $ \\
    &&$+ 8(1,3) - 3(1,3,4) - 3(1,3)(2,4) - (1,4,3,2) - 3(1,4,3) + (1,4) + 3(1,4)(2,3)$ \\
    &$[3,2,2,1,1]$  & $(3,4) - (3,5) - 3(2,3,4) + 3(2,3,5) - 3(2,4,3) + (2,4) + 3(2,5,3) - (2,5) $\\
    &&$+ 3(1,2)(3,4) - 3(1,2)(3,5) - (1,2,4,3) + (1,2,4)(3,5) + (1,2,5,3) $ \\ 
    &&$- (1,2,5)(3,4) - (1,3,4,2)   + (1,3,5,2) + 3(1,3)(2,4) - (1,3,5)(2,4) $\\
    &&$- 3(1,3)(2,5) + (1,3,4)(2,5) + (1,4,2)(3,5) - (1,4)(2,3,5)  + (1,4,3)(2,5) $\\
    &&$- (1,4)(2,5,3) - (1,5,2)(3,4) + (1,5)(2,3,4) - (1,5,3)(2,4) + (1,5)(2,4,3)$\\
  \bottomrule
\end{tabular}
\end{table}
}

As an example, the partition $\lambda = [5,3,3,2,2,1]\vdash 4^2$  gives ($n=6,\ d=4$)
\begin{equation}
\begin{split}
 \Phi(ST(\lambda)) = (3,4,5) + (3,5,4) - (2,4,5) - (2,5,4) - (1,4)(3,5) \\ + (1,4)(2,5) - (1,5)(3,4) + (1,5)(2,4)\,.  \nn
\end{split}
\end{equation}

In Table~\ref{fig:TP} we list the elements $\Phi(J_\lambda/\mathcal C_d)$ 
and $J_{\lambda}$ for some non-vanishing tensor polynomials  $ST(\lambda)$ in small dimensions. 
Notice that Formula  \eqref{fof} exhibits  $\Phi(J_\lambda/\mathcal C_d)$ as a linear combination of the permutations of $n$ with exactly $d$  cycles and coefficient  $\pm 1,0$.

\FloatBarrier

\medskip

 We may apply Proposition \ref{cse},  and obtain further non trivial TPI.
 For example, one has that
$$
ST((2,1,1)) - ST((1,2,1)+ ST((1,1,2))  
$$
is an identity on $2\times 2$ matrices in exactly $3$ tensor factors. 
Taking a single $ST(\lambda)$ however, no identity exists in $3$ tensor factors.\bigskip

\bibliographystyle{amsplain}
\bibliography{current_bib}
\end{document}